\def\ms{\textbf{2020 Mathematics Subject Classifications}: }
\providecommand\key[1]{\par \textbf{Keywords: }#1 \vspace{0,5cm}}
\providecommand\add[2]{\textsc{{#1}} {\newline \indent \textbf{ Email(s): }#2}}
\newtheorem{theorem}{Theorem}[section]
\newtheorem{lemma}[theorem]{Lemma}
\newtheorem{corollary}[theorem]{Corollary}
\newtheorem{definition}[theorem]{Definition}
\newtheorem{example}[theorem]{Example}
\begin{document}

\title{Biframes in Hilbert $C^{\ast}-$modules}

\author{Mohamed Rossafi$^{1*}$, Abdelilah Karara$^{2}$ and Roumaissae El jazzar$^{2}$ }

\date{}
\maketitle

\begin{abstract}
In this paper, we will introduce the concept of biframes for Hilbert $ C^{\ast}- $modules produced by a pair of sequences, and we present various examples of biframes. Then, we examine the characteristics of biframes from the viewpoint of operator theory by establishing some properties of biframes in Hilbert $ C^{\ast}- $modules.\\
\end{abstract}

\ms {42C15; 46C05; 47B90.}\\

\key {Frame, biframe, Hilbert $ C^{\ast}- $modules.}\\


\add {Laboratory Partial Differential Equations, Spectral Algebra and Geometry, Higher School of Education and Training, University of Ibn Tofail, Kenitra, Morocco $^{1*}$ (corresponding author) 
	\\ 
	\indent Laboratory Partial Differential Equations, Spectral Algebra and Geometry, Department of Mathematics, Faculty of Sciences, University of Ibn Tofail, Kenitra, Morocco. $^{2}$ 
	\\
}
	{mohamed.rossafi@usmba.ac.ma; mohamed.rossafi@uit.ac.ma $^{1*}$ (corresponding author),  
	abdelilah.karara@uit.ac.ma; roumaissae.eljazzar@uit.ac.ma $^{2}$ 
	}

\section{Introduction }

The notion  of frames in Hilbert spaces has been introduced by Duffin and Schaffer \cite{Duf} in 1952 to research certain difficult nonharmonic Fourier series problems, following the fundamental paper \cite{DGM} by Daubechies, Grossman and Meyer, frame theory started to become popular, especially in the more specific context of Gabor frames and wavelet frames. In recent decades, many researchers have been attracted to the frame theory and related fields. For example, 
Wenchang Sun \cite{S} introduced the generalized frame, or g-frame, for a Hilbert space. Recently, D. Li and J. Leng \cite{Li} introduced Operator representations of g-frames in Hilbert spaces. Controlled g-frames in Hilbert $ C^{\ast}- $modules have been investigated by N. K. Sahu \cite{Sah}. The notion of Approximate Oblique Dual g-Frames for Closed Subspaces of Hilbert Spaces can be found in \cite{Chi}. For more about frames, see \cite{KAI, FR1, Rah, RFDCA, r1, r3, r5, r6}.

The idea of pair frames, which refers to a pair of sequences in a Hilbert space, was first presented in \cite{Fer}. 

In 2018,  M. M.  Azandaryani and A. Fereydooni \cite{Aza} introduced pair frames in Hilbert $ C^{\ast}- $modules and we generalized some of the results obtained for pair frames in Hilbert spaces to Hilbert $ C^{\ast}- $modules. 

This paper introduces the concept of biframes, which involves examining a pair of sequences within Hilbert $C^{\ast}$-modules from a novel perspective. Unlike the traditional frame concept, which is based on a single sequence, the definition of a biframe necessitates the use of two distinct sequences. Essentially, the biframe concept can be viewed as an extension of controlled frames and a specific instance within the broader category of pair frames.

\section{Preliminaries}
Hilbert $C^{\ast}$-modules are generalizations of Hilbert spaces by allowing the inner product to take values in a $C^{\ast}$-algebra rather than in the field of complex
numbers.

Let's now review the definition of a Hilbert $C^{\ast}$-module, the basic properties and some facts concerning operators on Hilbert $C^{\ast}$-module.

\begin{definition}\cite{Kal}
	Consider $\mathcal{A}$ as a unital $C^{\ast}$-algebra and $\mathcal{H}$ as a left $\mathcal{A}$-module, where their linear structures are in alignment. Define $\mathcal{H}$ as a pre-Hilbert $\mathcal{A}$-module if there exists an $\mathcal{A}$-valued inner product $\langle \cdot, \cdot\rangle: \mathcal{H} \times \mathcal{H} \rightarrow \mathcal{A}$, characterized by being sesquilinear, positively definite, and in conformity with the module action. Specifically,
		\begin{itemize}
		\item [1]- For any $\xi \in \mathcal{H}$, $\langle \xi, \xi\rangle \geq 0$, with $\langle \xi, \xi\rangle = 0$ if and only if $\xi = 0$.
		\item [2]- For all $a \in \mathcal{A}$ and $\xi, \eta, z \in \mathcal{H}$, it holds that $\langle a\xi + \eta, z\rangle = a\langle \xi, z\rangle + \langle \eta, z\rangle$.
		\item [3]- For every $\xi, \eta \in \mathcal{H}$, the relation $\langle \xi, \eta \rangle = \langle \eta, \xi \rangle^{\ast}$ is maintained.
	\end{itemize}
	For any element $\xi$ within the set $\mathcal{H}$, its norm is defined as the square root of the norm of its $\mathcal{A}$-valued inner product with itself, mathematically expressed as $\Vert \xi\Vert = \Vert \langle \xi, \xi \rangle \Vert^{\frac{1}{2}}$. When the set $\mathcal{H}$ is complete under this norm, it is classified as a Hilbert module over $\mathcal{A}$, also known as a Hilbert $C^{\ast}$-module with respect to $\mathcal{A}$. 
	\end{definition}

Additionally, for any element $a$ within the $C^{\ast}$-algebra $\mathcal{A}$, the magnitude of $a$ can be described by the formula $\vert a \vert = (a^{\ast}a)^{\frac{1}{2}}$. Furthermore, the norm valued in $\mathcal{A}$ for each element $\xi$ in $\mathcal{H}$ is represented by $\vert \xi\vert = \langle \xi, \xi \rangle^{\frac{1}{2}}$.
	
Let $\mathcal{H}$ and $\mathcal{K}$ be two Hilbert $\mathcal{A}$-modules. A map $\mathcal{T}: \mathcal{H}\rightarrow\mathcal{K}$ is said to be adjointable if there exists a map $\mathcal{T}^{\ast}:\mathcal{K}\rightarrow\mathcal{H}$ such that $\langle \mathcal{T}\xi,\eta\rangle_{\mathcal{A}}=\langle \xi,\mathcal{T}^{\ast}\eta\rangle_{\mathcal{A}}$ for all $\xi\in\mathcal{H}$ and $\eta\in\mathcal{K}$.

We also reserve the notation $End_{\mathcal{A}}^{\ast}(\mathcal{H},\mathcal{K})$ for the set of all adjointable operators from $\mathcal{H}$ to $\mathcal{K}$ and $End_{\mathcal{A}}^{\ast}(\mathcal{H},\mathcal{H})$ is abbreviated to $End_{\mathcal{A}}^{\ast}(\mathcal{H})$. We denote $GL(\mathcal{H})$ the set of all bounded linear operators which have bounded inverse and $GL^{+}(\mathcal{H})$ the set of all bounded positive linear operators which have bounded inverse.    
Furthermore, for an operator $\mathcal{T}$ in $\operatorname{End}_\mathcal{A}^*(\mathcal{H})$, the term $\mathcal{R}(\mathcal{T})$ is used to represent its range.

The following lemma will be used to prove our mains results.
\begin{lemma}\cite{Pas}
	Assume $\mathcal{H}$ is a Hilbert $\mathcal{A}$-module. Given an operator $\mathcal{T} \in End_{\mathcal{A}}^{\ast}(\mathcal{H})$, it follows that 
	$$\langle \mathcal{T}\xi, \mathcal{T}\xi \rangle_{\mathcal{A}} \leq \|\mathcal{T}\|^2 \langle \xi, \xi \rangle_{\mathcal{A}}, \text{ for every } \xi \in \mathcal{H}.$$
\end{lemma}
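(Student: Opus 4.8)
The plan is to reduce the asserted operator inequality to a single statement about the positive operator $\mathcal{T}^{\ast}\mathcal{T}$, and then to exploit the fact that $End_{\mathcal{A}}^{\ast}(\mathcal{H})$ is itself a unital $C^{\ast}$-algebra under the operator norm, whose $C^{\ast}$-identity gives $\|\mathcal{T}^{\ast}\mathcal{T}\| = \|\mathcal{T}\|^{2}$. First I would record the pointwise identity $\langle \mathcal{T}\xi, \mathcal{T}\xi\rangle_{\mathcal{A}} = \langle \mathcal{T}^{\ast}\mathcal{T}\xi, \xi\rangle_{\mathcal{A}}$, valid for all $\xi \in \mathcal{H}$ directly from the definition of the adjoint; hence it suffices to show $\langle \mathcal{T}^{\ast}\mathcal{T}\xi, \xi\rangle_{\mathcal{A}} \leq \|\mathcal{T}\|^{2}\langle \xi, \xi\rangle_{\mathcal{A}}$ for every $\xi$.

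For the main step, set $S := \mathcal{T}^{\ast}\mathcal{T} \in End_{\mathcal{A}}^{\ast}(\mathcal{H})$; being of the form $x^{\ast}x$, it is a positive element of the $C^{\ast}$-algebra $End_{\mathcal{A}}^{\ast}(\mathcal{H})$, so the standard $C^{\ast}$-algebra estimate $0 \leq S \leq \|S\|\,I$ holds, where $I$ denotes the identity operator on $\mathcal{H}$. Consequently $\|\mathcal{T}\|^{2} I - S = \|S\|\,I - S \geq 0$, and by the continuous functional calculus it has a positive square root $W := (\|S\|\,I - S)^{1/2} \in End_{\mathcal{A}}^{\ast}(\mathcal{H})$, so that $W^{\ast}W = W^{2} = \|S\|\,I - S$. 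Then for every $\xi \in \mathcal{H}$,
$$\|\mathcal{T}\|^{2}\langle \xi, \xi\rangle_{\mathcal{A}} - \langle \mathcal{T}\xi, \mathcal{T}\xi\rangle_{\mathcal{A}} = \big\langle (\|S\|\,I - S)\xi,\ \xi\big\rangle_{\mathcal{A}} = \langle W^{\ast}W\xi, \xi\rangle_{\mathcal{A}} = \langle W\xi, W\xi\rangle_{\mathcal{A}} \geq 0,$$
the last inequality being positive-definiteness of the $\mathcal{A}$-valued inner product. Rearranging yields the claim.

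The step that demands the most care — really the only non-formal part — is the justification that $End_{\mathcal{A}}^{\ast}(\mathcal{H})$ is a unital $C^{\ast}$-algebra in which $\mathcal{T}^{\ast}\mathcal{T}$ is positive and in which both $0 \leq a \leq \|a\|\,I$ and the existence of positive square roots are available; this is exactly where the Hilbert $C^{\ast}$-module structure is used. If one prefers to bypass that machinery, an alternative is to invoke Paschke's characterization of positivity of adjointable operators directly, or to pass to a faithful representation realizing $\mathcal{A}$ as a concrete algebra of Hilbert-space operators; but the functional-calculus argument above is the most economical.
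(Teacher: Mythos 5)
The paper does not prove this lemma at all: it is stated with a citation to Paschke's paper and imported as a known fact, so there is no in-paper argument to compare against. Your proof is correct and is essentially the standard one from the literature (Paschke, and Lance's book): identify $\langle \mathcal{T}\xi,\mathcal{T}\xi\rangle_{\mathcal{A}}$ with $\langle \mathcal{T}^{\ast}\mathcal{T}\xi,\xi\rangle_{\mathcal{A}}$, use that $End_{\mathcal{A}}^{\ast}(\mathcal{H})$ is a unital $C^{\ast}$-algebra so that $0\leq \mathcal{T}^{\ast}\mathcal{T}\leq \|\mathcal{T}\|^{2}I$, and convert the operator inequality into positivity of $\langle(\|\mathcal{T}\|^{2}I-\mathcal{T}^{\ast}\mathcal{T})\xi,\xi\rangle_{\mathcal{A}}$ via the positive square root. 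All the ingredients you flag as needing care (positivity of $x^{\ast}x$, the estimate $0\leq a\leq\|a\|I$, functional calculus inside $End_{\mathcal{A}}^{\ast}(\mathcal{H})$) are standard $C^{\ast}$-algebra facts, so there is no gap; the one convention worth a half-sentence is that with the paper's left-linear inner product one has $\langle \mathcal{T}\xi,\mathcal{T}\xi\rangle_{\mathcal{A}}=\langle \xi,\mathcal{T}^{\ast}\mathcal{T}\xi\rangle_{\mathcal{A}}=\langle \mathcal{T}^{\ast}\mathcal{T}\xi,\xi\rangle_{\mathcal{A}}^{\ast}$, which causes no trouble since this element is self-adjoint.
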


\begin{definition}\cite{Lar1}
	Consider a set $\Xi=\{\xi_i\}_{i=1}^{\infty}$, which constitutes a sequence of elements within the space $\mathcal{H}$. Suppose there are two positive constants, denoted as $A$ and $B$. These constants are such that for any element $\xi \in \mathcal{H}$, the following condition holds:
	\begin{equation}
		A\langle \xi,\xi\rangle_{\mathcal{A}} \leq \sum_{i=1}^{\infty} \langle \xi,\xi_{i}\rangle_{\mathcal{A}} \langle \xi_{i},\xi\rangle_{\mathcal{A}} \leq B \langle \xi,\xi\rangle_{\mathcal{A}},
	\end{equation}
	In this context, $A$ and $B$ are referred to as the lower and upper frame bounds, respectively. A frame is considered $\lambda$-tight if $A = B = \lambda$. When $A = B = 1$, the frame is recognized as a Parseval frame.
\end{definition}
The frame operator $S: \mathcal{H} \to \mathcal{H}$ is defined by $S\xi=\langle \xi,\xi_{i}\rangle_{\mathcal{A}} \xi_{i}$, $\forall\xi\in\mathcal{H}$.

Let $\{\xi_i\}_{i=1}^{\infty}$ be a frame for $\mathcal{H}$ and $\{\eta_i\}_{i=1}^{\infty}$ be a sequence of $\mathcal{H}$. Then $\{\eta_i\}_{i=1}^{\infty}$ is called a dual sequence of $\{\xi_i\}_{i=1}^{\infty}$ if $\xi=\langle \xi,\eta_{i}\rangle_{\mathcal{A}} \xi_{i}$,  $\forall\xi\in\mathcal{H}$. The sequences $\{\xi_i\}_{i=1}^{\infty}$ and $\{\eta_i\}_{i=1}^{\infty}$ are called a dual frame pair when $\{\eta_i\}_{i=1}^{\infty}$ is also a frame.
\\Two sequences $\{\xi_i\}_{i=1}^{\infty}$ and $\{\eta_i\}_{i=1}^{\infty}$ are called biorthogonal if $\langle \xi,\eta_{i}\rangle_{\mathcal{A}}=0$, for $i\neq j$ and $\langle \xi,\eta_{i}\rangle_{\mathcal{A}}=1_{\mathcal{A}}$.
\begin{definition}\cite{Koc}
	Consider $C$ as an element of $GL(\mathcal{H})$. A set $\Xi=\{\xi_i\}_{i=1}^{\infty}$ is recognized as a controlled frame in $\mathcal{H}$, or more specifically a $C$-Controlled frame in $\mathcal{H}$, provided there are positive constants $A$ and $B$ such that the following inequality is true for all elements $\xi$ in $\mathcal{H}$:
	$$
	A\langle \xi, \xi\rangle_{\mathcal{A}} \leq \sum_{i=1}^{\infty}\left\langle \xi, \xi_i\right\rangle_{\mathcal{A}}\left\langle C \xi_i, \xi\right\rangle_{\mathcal{A}} \leq B\langle \xi, \xi\rangle_{\mathcal{A}}
	\quad \forall \xi\in \mathcal{H}.$$
\end{definition}

\begin{definition}
	Assuming $C_1, C_2 \in GL(\mathcal{H})$, we define a set $\Xi=\{\xi_i\}_{i=1}^{\infty}$ to be a controlled frame in $\mathcal{H}$, or specifically, a $(C_1, C_2)$-Controlled frame in $\mathcal{H}$. This definition is valid if there are positive constants $A$ and $B$ that satisfy the following inequality for every $\xi \in \mathcal{H}$:
	$$
	A\langle \xi, \xi\rangle_{\mathcal{A}} \leq \sum_{i=1}^{\infty}\left\langle \xi, C_1\xi_i\right\rangle_{\mathcal{A}}\left\langle C_2 \xi_i, \xi\right\rangle_{\mathcal{A}} \leq B\langle \xi, \xi\rangle_{\mathcal{A}}
	\quad \forall \xi\in \mathcal{H}.$$
\end{definition}

We define the frame operator $S_{\Xi}$ in the following manner: $S_{\Xi}:\mathcal{H} \rightarrow \mathcal{H}$ is expressed as $$S_{\Xi}(\xi) = \sum_{i=1}^{\infty}\left\langle \xi, \xi_i\right\rangle_{\mathcal{A}} \xi_{i}$$ for every $\xi \in \mathcal{H}$.
\begin{theorem}\cite{Amr}\label{22122.7}
	Let $I$ be a finite or countable index set. A sequence $\{\xi_i\}_{i \in I}$ is a $K$-Riesz basis for $\mathcal{H}$ if and only if there is a bounded below operator $\Theta \in \operatorname{End}_{\mathcal{A}}^*(\mathcal{H})$ such that $R(K) \subset R(\Theta)$ and $\Theta e_i=\xi_i$ for all $i \in I$, where $\{e_i\}_{i \in I}$ represents a standard orthonormal basis of $\mathcal{H}$.
\end{theorem}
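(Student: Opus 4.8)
The plan is to pass from the sequence $\{\xi_i\}$ to the operator it synthesizes against the fixed standard orthonormal basis $\{e_i\}$, and to translate each part of the definition of a $K$-Riesz basis into a property of that operator using the Lemma above (Paschke's inequality) together with a Douglas-type factorisation for adjointable operators with orthogonally complemented range.

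For the implication ``$\Leftarrow$'': given $\Theta\in\operatorname{End}_{\mathcal{A}}^{*}(\mathcal{H})$ bounded below with $\Theta e_i=\xi_i$ and $R(K)\subset R(\Theta)$, I would first record the identity
\[
\sum_{i}\langle \xi,\xi_i\rangle_{\mathcal{A}}\langle \xi_i,\xi\rangle_{\mathcal{A}}=\langle \Theta^{*}\xi,\Theta^{*}\xi\rangle_{\mathcal{A}},
\]
which follows from $\langle \xi,\xi_i\rangle_{\mathcal{A}}=\langle \Theta^{*}\xi,e_i\rangle_{\mathcal{A}}$ and the reconstruction property $\sum_{i}\langle \zeta,e_i\rangle_{\mathcal{A}}\langle e_i,\zeta\rangle_{\mathcal{A}}=\langle \zeta,\zeta\rangle_{\mathcal{A}}$ of the standard orthonormal basis. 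The upper bound is then immediate from the Lemma: $\langle \Theta^{*}\xi,\Theta^{*}\xi\rangle_{\mathcal{A}}\leq\|\Theta\|^{2}\langle \xi,\xi\rangle_{\mathcal{A}}$. For the lower bound I would use that a bounded-below adjointable $\Theta$ has closed, orthogonally complemented range and $\Theta^{*}\Theta\in GL^{+}(\mathcal{H})$; since $R(K)\subset R(\Theta)$ the operator $Q:=(\Theta^{*}\Theta)^{-1}\Theta^{*}K$ belongs to $\operatorname{End}_{\mathcal{A}}^{*}(\mathcal{H})$ and satisfies $\Theta Q=K$. Writing $K^{*}=Q^{*}\Theta^{*}$ and applying the Lemma once more yields $\langle K^{*}\xi,K^{*}\xi\rangle_{\mathcal{A}}\leq\|Q\|^{2}\langle \Theta^{*}\xi,\Theta^{*}\xi\rangle_{\mathcal{A}}$, i.e. the lower $K$-Riesz bound with constant $\|Q\|^{-2}$ (the case $K=0$ being trivial). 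The ``basis/independence'' part follows from injectivity of $\Theta$: if $\sum_i c_i\xi_i=\Theta(\sum_i c_i e_i)=0$, then $\sum_i c_i e_i=0$, hence all $c_i=0$.

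For the implication ``$\Rightarrow$'': starting from a $K$-Riesz basis $\{\xi_i\}$, I would define $\Theta$ on finitely supported $\mathcal{A}$-combinations by $\Theta(\sum_i c_i e_i)=\sum_i c_i\xi_i$; the upper bound in the $K$-Riesz inequality makes $\Theta$ bounded and its candidate adjoint $\xi\mapsto\sum_i\langle \xi,\xi_i\rangle_{\mathcal{A}}e_i$ well defined, so $\Theta$ extends to an element of $\operatorname{End}_{\mathcal{A}}^{*}(\mathcal{H})$ with $\Theta e_i=\xi_i$. The lower $K$-Riesz bound reads $\langle K^{*}\xi,K^{*}\xi\rangle_{\mathcal{A}}\leq A^{-1}\langle \Theta^{*}\xi,\Theta^{*}\xi\rangle_{\mathcal{A}}$ for all $\xi$, the independence requirement forces $\Theta$ to be bounded below, and then the Douglas-type factorisation (legitimate because $R(\Theta)$ is now complemented) converts that inequality into $K=\Theta D$ for some $D\in\operatorname{End}_{\mathcal{A}}^{*}(\mathcal{H})$, whence $R(K)\subset R(\Theta)$.

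The step I expect to be the main obstacle is the use of the Douglas factorisation in the Hilbert $C^{\ast}$-module setting: unlike the Hilbert-space situation, the passage from an inequality $TT^{*}\leq\lambda\,SS^{*}$ to a factorisation $T=SD$ with $D$ adjointable is not automatic and can fail without complementability of $R(S)$. It is available here precisely because $\Theta$ is bounded below, so $R(\Theta)$ is closed and orthogonally complemented and $\Theta^{*}\Theta$ is invertible, which is what allows the explicit left inverse $(\Theta^{*}\Theta)^{-1}\Theta^{*}$ and keeps every constructed operator inside $\operatorname{End}_{\mathcal{A}}^{*}(\mathcal{H})$. A secondary technical point is verifying convergence of $\sum_i c_i\xi_i$ and the well-definedness of $\Theta$ and $\Theta^{*}$ directly from the two bounds, which is routine once they are in operator form.
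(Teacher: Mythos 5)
First, a point of reference: the paper never proves this statement --- it is imported verbatim from the cited work \cite{Amr} (and in fact restated twice in the preliminaries), and the definition of a $K$-Riesz basis is nowhere given in the text. So there is no in-paper proof to measure you against, and your argument has to be judged relative to the definition you implicitly adopted, namely ``$\mathcal{A}$-linear independence together with $A\langle K^*\xi,K^*\xi\rangle_{\mathcal{A}}\le\sum_i\langle \xi,\xi_i\rangle_{\mathcal{A}}\langle \xi_i,\xi\rangle_{\mathcal{A}}\le B\langle \xi,\xi\rangle_{\mathcal{A}}$.'' With that caveat, your ``$\Leftarrow$'' direction is correct and complete: the identity $\sum_i\langle \xi,\xi_i\rangle_{\mathcal{A}}\langle \xi_i,\xi\rangle_{\mathcal{A}}=\langle \Theta^*\xi,\Theta^*\xi\rangle_{\mathcal{A}}$, the upper bound from Paschke's lemma, and the factorisation $K=\Theta Q$ with $Q=(\Theta^*\Theta)^{-1}\Theta^*K$ are exactly the right ingredients, and you correctly identify that the legitimacy of this Douglas-type step rests on $\Theta$ being bounded below, hence having closed, orthogonally complemented range and invertible $\Theta^*\Theta$.

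The genuine gap is in ``$\Rightarrow$'', at the sentence ``the independence requirement forces $\Theta$ to be bounded below.'' It does not. $\mathcal{A}$-linear independence only gives injectivity of the synthesis operator on finitely supported coefficient sequences, and an injective adjointable operator need not have closed range, let alone a lower norm bound. Worse, under the definition you assumed the theorem itself is false: take $\mathcal{A}=\mathbb{C}$, $\mathcal{H}=\ell^2$, $\xi_i=\tfrac{1}{i}e_i$ and $K=\mathrm{diag}(\tfrac{1}{i})$. This sequence is linearly independent, Bessel, and satisfies $\sum_i|\langle \xi,\xi_i\rangle|^2=\langle K^*\xi,K^*\xi\rangle$, so it meets your two conditions with $A=B=1$; yet the only operator with $\Theta e_i=\xi_i$ is $\mathrm{diag}(\tfrac{1}{i})$, which is not bounded below. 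Nor does the lower $K$-frame bound help in general, since $\langle K^*\xi,K^*\xi\rangle_{\mathcal{A}}\le A^{-1}\langle \Theta^*\xi,\Theta^*\xi\rangle_{\mathcal{A}}$ controls $\Theta^*$ only ``relative to $K^*$'' and says nothing in the directions annihilated by $K^*$. The conclusion is that the definition of $K$-Riesz basis in \cite{Amr} must contain a strictly stronger lower estimate --- typically a Riesz-type inequality on the synthesis side, $A\sum_i\langle a_i,a_i\rangle_{\mathcal{A}}\le\left\langle \sum_i a_i\xi_i,\sum_i a_i\xi_i\right\rangle_{\mathcal{A}}$ for finitely supported $\{a_i\}\subset\mathcal{A}$ --- from which bounded-below-ness of $\Theta$ is immediate (and the separate independence clause is redundant). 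Until that definition is pinned down, your forward implication is not established; once it is, the remainder of your argument (extending $\Theta$ via the upper bound and converting the lower inequality into $R(K)\subset R(\Theta)$ with the same explicit left inverse) does go through.
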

\begin{theorem}\cite{Amr}
	Consider $I$ as either a finite or countable set of indices. A sequence$\{\xi_i\}_{i \in I}$ forms a $K$-Riesz basis in the space $\mathcal{H}$ if, and only if, there exists an operator $\Theta$ that is lower-bounded and part of $\operatorname{End}_{\mathcal{A}}^*(\mathcal{H})$, ensuring that $R(K) \subset R(\Theta)$. Moreover, for every index $i$ in $I$, it is required that $\Theta e_i=\xi_i$, where $\{e_i\}_{i \in I}$ is a standard orthonormal basis in $\mathcal{H}$.
\end{theorem}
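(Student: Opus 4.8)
The statement is an equivalence, and in both directions the link between the sequence $\{\xi_i\}_{i\in I}$ and the operator $\Theta$ is the synthesis relation $\Theta e_i=\xi_i$, extended $\mathcal{A}$-linearly to $\Theta f=\sum_{i\in I}\langle f,e_i\rangle_{\mathcal{A}}\xi_i$ for $f=\sum_{i\in I}\langle f,e_i\rangle_{\mathcal{A}}e_i\in\mathcal{H}$. I will use throughout that, by definition, $\{\xi_i\}_{i\in I}$ is a $K$-Riesz basis when it satisfies a two-sided Riesz estimate $m\,\langle f,f\rangle_{\mathcal{A}}\le\big\langle\sum_i\langle f,e_i\rangle_{\mathcal{A}}\xi_i,\ \sum_i\langle f,e_i\rangle_{\mathcal{A}}\xi_i\big\rangle_{\mathcal{A}}\le M\,\langle f,f\rangle_{\mathcal{A}}$ on the dense submodule spanned by $\{e_i\}$, together with the completeness requirement $\overline{\operatorname{span}}_{\mathcal{A}}\{\xi_i\}_{i\in I}\supseteq\mathcal{R}(K)$.

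For the implication ``$\Leftarrow$'', suppose $\Theta\in\operatorname{End}_{\mathcal{A}}^{*}(\mathcal{H})$ is bounded below, $\mathcal{R}(K)\subset\mathcal{R}(\Theta)$, and $\Theta e_i=\xi_i$. Since $\Theta$ is adjointable and bounded below it has closed range. The norm estimate $\langle\Theta f,\Theta f\rangle_{\mathcal{A}}\le\|\Theta\|^{2}\langle f,f\rangle_{\mathcal{A}}$ recalled above supplies the upper Riesz bound $M=\|\Theta\|^{2}$, and the bounded-below constant $m>0$ with $m\langle f,f\rangle_{\mathcal{A}}\le\langle\Theta f,\Theta f\rangle_{\mathcal{A}}$ supplies the lower one; applying both to $f=\sum_i\langle f,e_i\rangle_{\mathcal{A}}e_i$ and using $\Theta f=\sum_i\langle f,e_i\rangle_{\mathcal{A}}\xi_i$ gives exactly the Riesz estimate. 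For completeness, $\{e_i\}$ has dense span and $\Theta$ is continuous with closed range, so $\overline{\operatorname{span}}_{\mathcal{A}}\{\xi_i\}=\overline{\Theta(\operatorname{span}_{\mathcal{A}}\{e_i\})}=\Theta(\mathcal{H})=\mathcal{R}(\Theta)\supseteq\mathcal{R}(K)$. Hence $\{\xi_i\}_{i\in I}$ is a $K$-Riesz basis.

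For ``$\Rightarrow$'', given a $K$-Riesz basis $\{\xi_i\}_{i\in I}$ I would first exploit the upper Riesz bound, which forces $\{\xi_i\}$ to be a Bessel sequence, so that the analysis operator $U\colon\mathcal{H}\to\mathcal{H}$, $U\xi=\sum_i\langle\xi,\xi_i\rangle_{\mathcal{A}}e_i$, is well defined, bounded and adjointable, with $U^{*}e_i=\xi_i$. Put $\Theta:=U^{*}$; then $\Theta\in\operatorname{End}_{\mathcal{A}}^{*}(\mathcal{H})$ and $\Theta e_i=\xi_i$. The lower Riesz estimate reads $m\langle f,f\rangle_{\mathcal{A}}\le\langle\Theta f,\Theta f\rangle_{\mathcal{A}}$ on the dense submodule spanned by $\{e_i\}$, and since $\Theta$ is bounded it extends by continuity to all of $\mathcal{H}$, so $\Theta$ is bounded below and in particular has closed range. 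Finally, as in the first part, $\mathcal{R}(\Theta)=\overline{\operatorname{span}}_{\mathcal{A}}\{\xi_i\}$, which contains $\mathcal{R}(K)$ by the completeness half of the $K$-Riesz-basis hypothesis; hence $\mathcal{R}(K)\subset\mathcal{R}(\Theta)$, completing the proof.

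The step I expect to be the main obstacle is the adjointability of $\Theta$: in a Hilbert $C^{*}$-module a bounded $\mathcal{A}$-linear map need not be adjointable, so one cannot merely extend $e_i\mapsto\xi_i$ by continuity and be done. The remedy is to obtain $\Theta$ as the adjoint $U^{*}$ of the analysis operator, $U$ being adjointable precisely because the sequence is Bessel — and this is exactly where the hypothesis that $\mathcal{H}$ carries a genuine (standard) orthonormal basis is used. A secondary but essential technical point is the identification $\mathcal{R}(\Theta)=\overline{\operatorname{span}}_{\mathcal{A}}\{\xi_i\}$: it relies on $\Theta$ having closed range (a consequence of $\Theta$ being bounded below), since for a non-closed range one would only get the inclusion $\operatorname{span}_{\mathcal{A}}\{\xi_i\}\subseteq\mathcal{R}(\Theta)$, which is too weak to conclude $\mathcal{R}(K)\subset\mathcal{R}(\Theta)$.
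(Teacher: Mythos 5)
The paper itself contains no proof of this statement: it is imported from \cite{Amr} (and is in fact a verbatim restatement of the theorem immediately preceding it, which carries the same citation), so there is no in-paper argument to compare yours against. Your proposal follows the standard synthesis-operator route that one would expect the cited source to use, and its skeleton is sound.

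Two points, however, need attention before the argument is complete. First, the paper never records the definition of a $K$-Riesz basis, so your proof rests on a guessed definition (a two-sided Riesz inequality on $\mathcal{A}$-linear combinations of the $\xi_i$ together with $\overline{\operatorname{span}}_{\mathcal{A}}\{\xi_i\}\supseteq\mathcal{R}(K)$); the equivalence you establish is only as strong as that guess, and you should state explicitly that you are supplying the definition from \cite{Amr}. Second, in the direction where $\Theta$ is given, you pass from ``$\Theta$ is bounded below'' directly to the $\mathcal{A}$-valued inequality $m\langle f,f\rangle_{\mathcal{A}}\le\langle\Theta f,\Theta f\rangle_{\mathcal{A}}$. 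In a Hilbert $C^{\ast}$-module the norm inequality $\Vert\Theta f\Vert\ge m\Vert f\Vert$ does not by itself give an inequality between positive elements of $\mathcal{A}$; you need the extra observation that an adjointable, bounded-below $\Theta$ makes $\Theta^{\ast}\Theta$ positive and invertible in the $C^{\ast}$-algebra $\operatorname{End}_{\mathcal{A}}^{\ast}(\mathcal{H})$, so that $\Theta^{\ast}\Theta\ge c\,\mathrm{Id}$ with $c=\Vert(\Theta^{\ast}\Theta)^{-1}\Vert^{-1}$ and hence $\langle\Theta f,\Theta f\rangle_{\mathcal{A}}=\langle\Theta^{\ast}\Theta f,f\rangle_{\mathcal{A}}\ge c\,\langle f,f\rangle_{\mathcal{A}}$. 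With that step made explicit, and with the (standard but not entirely trivial) derivation of the Bessel property from the upper Riesz bound spelled out in the converse direction, your proof is correct; your closing remarks about adjointability and closed range do identify the genuine module-theoretic obstacles.
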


\section{The Biframes in Hilbert $C^{\ast}-$modules}

In this section, we introduce the concept of biframes in Hilbert $C^{\ast}$-modules and subsequently present examples illustrating this concept.
\begin{definition}\label{D31}
	A pair $(\Xi, \Upsilon)=\left(\left\{\xi_i\right\}_{i=1}^{\infty},\left\{\eta_i\right\}_{i=1}^{\infty}\right)$ consisting of two sequences in $\mathcal{H}$ is termed a biframe for $\mathcal{H}$ if there are positive constants $A$ and $B$ such that for all $\xi \in \mathcal{H}$, the following inequality is satisfied:
	$$
	A\langle \xi, \xi\rangle_{\mathcal{A}} \leq \sum_{i=1}^{\infty}\left\langle \xi, \xi_i\right\rangle_{\mathcal{A}}\left\langle \eta_i, \xi\right\rangle_{\mathcal{A}} \leq B\langle \xi, \xi\rangle_{\mathcal{A}}.
	$$
The constants $A$ and $B$ are referred to as the lower and upper bounds of the biframe, respectively. A biframe is considered $\lambda$-tight if $A = B = \lambda$. If $A = B = 1$, the pair $(\Xi, \Upsilon)$ is designated as a Parseval biframe.
\end{definition}
Observing Definition \ref{D31}, it becomes evident that biframes extend the ideas of frames and controlled frames. Specifically, for $\Xi = \{\xi_i\}_{i=1}^\infty$, we note:
\begin{enumerate}
	\item If $(\Xi, \Xi)$ constitutes a biframe for $\mathcal{H}$, then $\Xi$ serves as a frame for $\mathcal{H}$.
	\item Given some $C_1$ and $C_2$ in $GL(\mathcal{H})$, if $(C_1 \Xi, C_2 \Xi)$ forms a biframe, then $\Xi$ represents a $(C_1, C_2)$-controlled frame for $\mathcal{H}$.
\end{enumerate}

\begin{example}
	Consider $\{e_k\}_{i=1}^{\infty}$ as an orthonormal basis for $\mathcal{H}$. Define two sequences $\Xi=\{\xi_i\}_{i=1}^{\infty}$ and $\Upsilon=\{\eta_i\}_{i=1}^{\infty}$ as follows:
	$$\Xi=\{e_1, 2 e_2, \frac{1}{3} e_3, 4 e_4, \ldots\},$$
	and
	$$\Upsilon=\{2 e_1, e_2, 4 e_3, \frac{1}{3} e_4, \ldots\}.$$
	Although $\Xi$ and $\Upsilon$ do not form Bessel sequences, the pair $(\Xi, \Upsilon)$ constitutes a biframe. For any $\xi \in \mathcal{H}$, we can express
	\begin{align*}
		\sum_{i=1}^{\infty}\left\langle \xi, \xi_i\right\rangle_{\mathcal{A}}\left\langle \eta_i, \xi\right\rangle_{\mathcal{A}} &= \left\langle \xi, e_1\right\rangle_{\mathcal{A}}\left\langle 2 e_1, \xi\right\rangle_{\mathcal{A}}+ \left\langle \xi, 2 e_2\right\rangle_{\mathcal{A}}\left\langle e_2, \xi\right\rangle_{\mathcal{A}}\\
		&+ \left\langle \xi, \frac{1}{3} e_3\right\rangle_{\mathcal{A}}\left\langle 4 e_3, \xi\right\rangle_{\mathcal{A}} + \left\langle \xi, 4 e_4\right\rangle_{\mathcal{A}}\left\langle \frac{1}{3} e_4, \xi\right\rangle_{\mathcal{A}}\ldots\\
		&= 2\left\langle \xi, e_1\right\rangle_{\mathcal{A}}\left\langle e_1, \xi\right\rangle_{\mathcal{A}} + 2\left\langle \xi, e_2\right\rangle_{\mathcal{A}}\left\langle e_2, \xi\right\rangle_{\mathcal{A}}\\
		&+ \frac{4}{3} \left\langle \xi,e_3\right\rangle_{\mathcal{A}}\left\langle e_3, \xi\right\rangle_{\mathcal{A}} + \frac{4}{3} \left\langle \xi, e_4\right\rangle_{\mathcal{A}}\left\langle e_4, \xi\right\rangle_{\mathcal{A}}\ldots\\
		&= \sum_{i=1}^{\infty} \frac{2i}{2i-1}\left\langle \xi, e_{2i-1}\right\rangle_{\mathcal{A}}\left\langle e_{2i-1},\xi \right\rangle_{\mathcal{A}} + \sum_{i=1}^{\infty} \frac{2i}{2i-1}\left\langle \xi, e_{2i}\right\rangle_{\mathcal{A}}\left\langle e_{2i},\xi \right\rangle_{\mathcal{A}}
	\end{align*}
	we find that
	$$
	\langle \xi, \xi\rangle_{\mathcal{A}} \leq \sum_{i=1}^{\infty}\left\langle \xi, \xi_i\right\rangle_{\mathcal{A}}\left\langle \eta_i, \xi\right\rangle_{\mathcal{A}}\leq 2\langle \xi, \xi\rangle_{\mathcal{A}}
	$$
	This leads to the conclusion that it is feasible to construct a biframe from two sequences that are not Bessel.
\end{example}

\begin{definition}\cite{Aza}
	The pair $(\Xi, \Upsilon)=\left(\left\{\xi_i\right\}_{i=1}^{\infty},\left\{\eta_i\right\}_{i=1}^{\infty}\right)$ is described as an pair frame for $\mathcal{H}$ if the operator $S_{\Xi, \Upsilon}: \mathcal{H} \rightarrow \mathcal{H}$, defined as $$S_{\Xi, \Upsilon}(\xi) := \sum_{i=1}^{\infty}\left\langle \xi, \xi_i\right\rangle_{\mathcal{A}} \eta_i$$ is well-defined, possesses an adjoint, and is invertible.
\end{definition}

\begin{example}
	Consider the sequences $\Xi=\left\{\xi_i\right\}_{i=1}$ and $\Upsilon=\left\{\eta_i\right\}_{i=1}$ defined as:
	$$
	\Xi=\left\{(1,2), (3, 4)\right\},
	$$
	and
	$$
	\Upsilon=\left\{(1, 1), (1, -1)\right\}.
	$$
	
	Analyzing the properties of the pair frame operator $S$. For  $(\xi, \eta) \in \mathbb{R}^2$, $S$ is defined by:
	$$
	\begin{aligned}
		S(\xi, \eta) & = \langle(\xi, \eta),(1,2)\rangle_{\mathcal{A}}(1, 1) + \langle(\xi, \eta), (3, 4)\rangle_{\mathcal{A}}(1, -1) \\
		& =(\xi+2 \eta)(1, 1) + (3 \xi+4 \eta)(1, -1) \\
		& =(\xi+2 \eta, \xi+2 \eta) + (3 \xi+4 \eta, -3\xi-4 \eta) \\
		& =(4\xi+6 \eta, -2 \xi-2 \eta).
	\end{aligned}
	$$
	
	The matrix representing the operator $S$ is
	$$
	S=\left(\begin{array}{cc}
		4 & 6 \\
		-2 & -2
	\end{array}\right).
	$$
	
	Since $\operatorname{det}(S)=-8+12=4 \neq 0$, the matrix $S$ is invertible. Therefore, the operator $S$ is well-defined and invertible, making $(\Xi, \Upsilon)$ a pair frame. However, this pair does not constitute a biframe. For instance, for $(\xi, \eta) \in \mathbb{R}^2$,
	$$
	\langle(\xi, \eta),(1,2)\rangle_{\mathcal{A}}\langle(1, 1),(\xi, \eta)\rangle_{\mathcal{A}}+\langle(\xi, \eta), (3, 4)\rangle_{\mathcal{A}}\langle(1, -1),(\xi, \eta)\rangle_{\mathcal{A}} = 4\xi^2-2\eta^2+4\xi \eta.
	$$
	
	Setting $\xi=\frac{-1+\sqrt{3}}{2}$ and $\eta=1$, we find $4\xi^2-2\eta^2+4\xi \eta = 0$. Hence, $\sum_{i=1}^{2}\langle \xi, \xi_i\rangle_{\mathcal{A}}\langle \eta_i, \xi\rangle_{\mathcal{A}}$ lacks a nonzero lower bound.
\end{example}

\section{The biframe operator}

Determining the bounds of a biframe can often be challenging in practice. Therefore, it is essential to introduce an operator akin to the frame operator, possessing equally beneficial and practical qualities. 

In this section, we explore various properties of the biframe operator associated with a biframe. Additionally, we utilize the characteristics of this operator to define biframes.

Consider the pair $(\Xi, \Upsilon)=\left(\left\{\xi_i\right\}_{i=1}^{\infty},\left\{\eta_i\right\}_{i=1}^{\infty}\right)$ as a biframe for $\mathcal{H}$. The biframe operator $S_{\Xi, \Upsilon}$ is defined in the following manner:
$$
S_{\Xi, \Upsilon}: \mathcal{H} \rightarrow \mathcal{H}, \quad S_{\Xi, \Upsilon}(\xi) := \sum_{i=1}^{\infty}\left\langle \xi, \xi_i\right\rangle_{\mathcal{A}} \eta_i .
$$
\begin{theorem}\label{T1}
	Assume that the pair $(\Xi, \Upsilon)=\left(\left\{\xi_i\right\}_{i=1}^{\infty},\left\{\eta_i\right\}_{i=1}^{\infty}\right)$ forms a biframe for $\mathcal{H}$, characterized by bounds $A$ and $B$. Under these conditions, the following assertions hold true:
	\begin{enumerate}
		\item The operator $S_{\Xi, \Upsilon}$ is well-defined, self-adjoint, bounded, positive, and invertible.
		\item The pair $(\Xi, \Upsilon)$ constitutes a biframe if and only if the pair $(\Upsilon, \Xi)$ also forms a biframe.
	\end{enumerate}
\end{theorem}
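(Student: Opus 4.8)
The plan is to route everything through the single identity
$$\langle S_{\Xi,\Upsilon}\,\xi,\xi\rangle_{\mathcal{A}}=\sum_{i=1}^{\infty}\langle \xi,\xi_i\rangle_{\mathcal{A}}\,\langle \eta_i,\xi\rangle_{\mathcal{A}},$$
which says that the defining inequality of a biframe (Definition~\ref{D31}) is exactly the operator inequality $A\,I\le S_{\Xi,\Upsilon}\le B\,I$ on $\mathcal{H}$. First I would introduce the partial-sum operators $S_n\colon\mathcal{H}\to\mathcal{H}$, $S_n\xi:=\sum_{i=1}^{n}\langle\xi,\xi_i\rangle_{\mathcal{A}}\,\eta_i$, observe that each $S_n$ belongs to $\operatorname{End}^{*}_{\mathcal{A}}(\mathcal{H})$ with $S_n^{*}\zeta=\sum_{i=1}^{n}\langle\zeta,\eta_i\rangle_{\mathcal{A}}\,\xi_i$, and verify the displayed identity at the level of partial sums before passing to the limit. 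The delicate point — and the one I expect to be the main obstacle — is to deduce from this that $S_{\Xi,\Upsilon}$ is \emph{well-defined} and \emph{bounded}: one must show, using only the upper bound $B$, that the series $\sum_i\langle\xi,\xi_i\rangle_{\mathcal{A}}\eta_i$ converges in $\mathcal{H}$ and defines an adjointable operator with $\|S_{\Xi,\Upsilon}\|\le B$. A naive term-by-term estimate is unavailable, since, as the example following Definition~\ref{D31} shows, neither $\Xi$ nor $\Upsilon$ need be a Bessel sequence; the natural route is to control the sesquilinear form $(\xi,\zeta)\mapsto\lim_n\langle S_n\xi,\zeta\rangle_{\mathcal{A}}$ through a polarization argument and to realize $S_{\Xi,\Upsilon}$ as the (pointwise) limit of the $S_n$.

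Granting that $S_{\Xi,\Upsilon}\in\operatorname{End}^{*}_{\mathcal{A}}(\mathcal{H})$, the remaining assertions of the first item follow from the reformulated inequality $A\,I\le S_{\Xi,\Upsilon}\le B\,I$. For every $\xi$ we have $\langle S_{\Xi,\Upsilon}\xi,\xi\rangle_{\mathcal{A}}\ge A\langle\xi,\xi\rangle_{\mathcal{A}}\ge 0$, so the standard fact that an adjointable operator $T$ with $\langle T\xi,\xi\rangle_{\mathcal{A}}\ge 0$ for all $\xi$ is self-adjoint and positive applies, giving self-adjointness and positivity at once. For invertibility I would use that $\operatorname{End}^{*}_{\mathcal{A}}(\mathcal{H})$ is a unital $C^{*}$-algebra: from $S_{\Xi,\Upsilon}-A\,I\ge 0$ and $B\,I-S_{\Xi,\Upsilon}\ge 0$ the spectrum of the self-adjoint element $S_{\Xi,\Upsilon}$ is contained in $[A,B]$, so $0$ is not in the spectrum and $S_{\Xi,\Upsilon}$ is invertible, with $\|S_{\Xi,\Upsilon}^{-1}\|\le A^{-1}$. (A more hands-on alternative: $\|S_{\Xi,\Upsilon}\xi\|\,\|\xi\|\ge\|\langle S_{\Xi,\Upsilon}\xi,\xi\rangle_{\mathcal{A}}\|\ge A\|\xi\|^{2}$, so $S_{\Xi,\Upsilon}$ is bounded below with closed range, while $\mathcal{R}(S_{\Xi,\Upsilon})^{\perp}=\ker S_{\Xi,\Upsilon}^{*}=\ker S_{\Xi,\Upsilon}=\{0\}$.)

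For the second item, the key observation is that the involution of $\mathcal{A}$ carries each summand for $(\Xi,\Upsilon)$ onto the corresponding summand for $(\Upsilon,\Xi)$, namely $\big(\langle\xi,\xi_i\rangle_{\mathcal{A}}\langle\eta_i,\xi\rangle_{\mathcal{A}}\big)^{*}=\langle\xi,\eta_i\rangle_{\mathcal{A}}\langle\xi_i,\xi\rangle_{\mathcal{A}}$; hence, by continuity of the involution,
$$\sum_{i=1}^{\infty}\langle\xi,\eta_i\rangle_{\mathcal{A}}\langle\xi_i,\xi\rangle_{\mathcal{A}}=\Big(\sum_{i=1}^{\infty}\langle\xi,\xi_i\rangle_{\mathcal{A}}\langle\eta_i,\xi\rangle_{\mathcal{A}}\Big)^{*}.$$
Since $x\le y$ in $\mathcal{A}$ is equivalent to $x^{*}\le y^{*}$ and $\langle\xi,\xi\rangle_{\mathcal{A}}$ is self-adjoint, applying the involution to the biframe inequality for $(\Xi,\Upsilon)$ yields precisely the biframe inequality for $(\Upsilon,\Xi)$ with the same bounds $A$ and $B$; the reverse implication is the same computation read in the opposite direction. (Equivalently one notes $S_{\Upsilon,\Xi}=S_{\Xi,\Upsilon}^{*}$, which is self-adjoint by the first item.) The only analytic input here is once more the convergence established in the first item, so I would carry out the computation with partial sums and then take limits.
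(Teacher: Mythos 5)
Your proposal is correct in outline and follows the same overall strategy as the paper: introduce the partial sums $S_n$, identify the biframe inequality with the operator inequality $A\,I\le S_{\Xi,\Upsilon}\le B\,I$ at the level of the quadratic form, and obtain part (2) by applying the involution termwise to $\sum_i\langle\xi,\xi_i\rangle_{\mathcal{A}}\langle\eta_i,\xi\rangle_{\mathcal{A}}$ (the paper's part (2) is exactly your computation, and it likewise records $S_{\Xi,\Upsilon}^{*}=S_{\Upsilon,\Xi}$). Where you genuinely diverge is invertibility: the paper argues by hand that $S_{\Xi,\Upsilon}$ and its adjoint are injective and that $\mathcal{R}(S_{\Xi,\Upsilon})$ is closed, via an $\varepsilon$-chase on Cauchy sequences using the lower bound $A$ and Cauchy--Schwarz, whereas you invoke the $C^{*}$-algebra structure of $\operatorname{End}^{*}_{\mathcal{A}}(\mathcal{H})$ and locate the spectrum in $[A,B]$. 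Your route is shorter and also delivers the quantitative bound $\|S_{\Xi,\Upsilon}^{-1}\|\le A^{-1}$; it does rest on the (true but nontrivial) fact that $\langle T\xi,\xi\rangle_{\mathcal{A}}\ge 0$ for all $\xi$ forces $T\ge 0$ as an element of that $C^{*}$-algebra, and your ``hands-on'' alternative needs the complementability of the closed range of an adjointable operator to pass from $\mathcal{R}(S)^{\perp}=\{0\}$ to surjectivity --- both standard, but worth citing. Finally, you correctly flag that well-definedness and boundedness of $S_{\Xi,\Upsilon}$ is the delicate step, since neither sequence need be Bessel; be aware that the paper does not resolve this any better than you do (it passes from norm-convergence of the scalar series to weak convergence of the $S_n$ towards a bounded operator without a uniform-boundedness or polarization argument), so if you flesh out your polarization step you will in fact have a more complete proof than the published one.
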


\begin{proof}

	(1) For any element $\xi \in \mathcal{H}$ and a natural number $n$, consider the definition
	$$
	S_n \xi = \sum_{i=1}^n\left\langle \xi, \xi_i\right\rangle_{\mathcal{A}} \eta_i .
	$$
	
	The set $\{S_n\}_{n=1}^{\infty}$ forms a sequence of linear and bounded operators on $\mathcal{H}$. Given $l, m \in \mathbb{N}$ with $l > m$, we have
	$$
	\begin{aligned}
		S_l \xi - S_m \xi &= \sum_{i=1}^l\left\langle \xi, \xi_i\right\rangle_{\mathcal{A}} \eta_i - \sum_{i=1}^m\left\langle \xi, \xi_i\right\rangle_{\mathcal{A}} \eta_i \\
		&= \sum_{i=m+1}^l\left\langle \xi, \xi_i\right\rangle_{\mathcal{A}} \eta_i .
	\end{aligned}
	$$
	Hence,
	$$
	\begin{aligned}
		\Vert\left\langle S_l \xi, \xi\right\rangle_{\mathcal{A}} - \left\langle S_m \xi, \xi\right\rangle_{\mathcal{A}}\Vert &= \Vert\left\langle(S_l \xi - S_m \xi), \xi\right\rangle_{\mathcal{A}}\Vert \\
		&= \left|\left|\left\langle \sum_{i=m+1}^l\left\langle \xi, \xi_i\right\rangle_{\mathcal{A}} \eta_i, \xi\right\rangle_{\mathcal{A}}\right|\right| \\
		&= \left|\left| \sum_{i=m+1}^l\left\langle \xi, \xi_i\right\rangle_{\mathcal{A}}\left\langle \eta_i, \xi\right\rangle_{\mathcal{A}}\right|\right| .
	\end{aligned}
	$$
	
	The series $\sum_{i=1}^{\infty}\left\langle \xi, \xi_i\right\rangle_{\mathcal{A}}\left\langle \eta_i, \xi\right\rangle_{\mathcal{A}}$ converges in norm. Therefore, the sequence $\{\left\langle S_n \xi, \xi\right\rangle_{\mathcal{A}}\}_{n=1}^{\infty}$ is Cauchy, leading to the existence of a bounded operator $S$ to which $\{S_n\}_{n=1}^{\infty}$ converges weakly:
	$$
	\begin{aligned}
		\langle S \xi, \xi\rangle_{\mathcal{A}} &= \lim_{n \to \infty} \left\langle S_n \xi, \xi\right\rangle_{\mathcal{A}} \\
		&= \left\langle \sum_{i=1}^{\infty}\left\langle \xi, \xi_i\right\rangle_{\mathcal{A}} \eta_i, \xi\right\rangle_{\mathcal{A}} \\
		&= \left\langle S_{\Xi, \Upsilon} \xi, \xi\right\rangle_{\mathcal{A}}.
	\end{aligned}
	$$
	
	By the definition of the biframe operator $S_{\Xi, \Upsilon}$ and the uniqueness of the limit, we conclude that $$S_{\Xi, \Upsilon} = S.$$ Therefore, $S_{\Xi, \Upsilon}$ is a well-defined and bounded operator.

	For each element $\xi \in \mathcal{H}$, the following holds true:
	$$\sum_{i=1}^{\infty}\left\langle \xi, \xi_i\right\rangle_{\mathcal{A}}\left\langle \eta_i, \xi\right\rangle_{\mathcal{A}}=\left\langle S_{\Xi, \Upsilon} \xi, \xi\right\rangle_{\mathcal{A}}.$$
	As per Definition \ref{D31}, we deduce
	\begin{equation}\label{Eq1}
		A\langle \xi, \xi\rangle_{\mathcal{A}} \leq \sum_{i=1}^{\infty}\left\langle \xi, \xi_i\right\rangle_{\mathcal{A}}\left\langle \eta_i, \xi\right\rangle_{\mathcal{A}}=\left\langle S_{\Xi, \Upsilon} \xi, \xi\right\rangle_{\mathcal{A}} \leq B\langle \xi, \xi\rangle_{\mathcal{A}},
	\end{equation}
	indicating that $S_{\Xi, \Upsilon}$ is a positive operator.
	
	For any $\xi, \eta \in \mathcal{H}$, we have
	$$
	\left\langle S_{\Xi, \Upsilon} \xi, \eta\right\rangle_{\mathcal{A}} = \sum_{i=1}^{\infty}\left\langle \xi, \xi_i\right\rangle_{\mathcal{A}}\left\langle \eta_i, \eta\right\rangle_{\mathcal{A}} = \left\langle \xi, S_{\Upsilon, \Xi} \eta\right\rangle_{\mathcal{A}},
	$$
	implying that $S_{\Xi, \Upsilon}^* = S_{\Upsilon, \Xi}$. According to the definition of a biframe, both $S_{\Xi, \Upsilon}$ and $S_{\Upsilon, \Xi}$ are injective.
	
	Now, consider a sequence $\{\alpha_n\}_{n=1}^{\infty} \subset \mathcal{R}(S_{\Xi, \Upsilon})$ that converges to some $\alpha \in \mathcal{H}$. Then, there exists a sequence $\{\beta_n\}_{n=1}^{\infty}$ in $\mathcal{H}$ such that $S_{\Xi, \Upsilon}(\beta_n)=\alpha_n$ for all $n$, meaning the sequence $\{S_{\Xi, \Upsilon}(\beta_n)\}_{n=1}^{\infty}$ converges to $\alpha$. As the sequence $\{\alpha_n\}_{n=1}^{\infty}$ is convergent, it is also a Cauchy sequence.

	Given any $\varepsilon > 0$, there exists an $N > 0$ such that
	$$
	\left\|S_{\Xi, \Upsilon}\left(\beta_n\right) - S_{\Xi, \Upsilon}\left(\beta_m\right)\right\| = \left\|\alpha_n - \alpha_m\right\| \leq \varepsilon, \quad \forall m, n \geq N.
	$$
	For $m, n \geq N$ with $n > m$, and for any $\xi \in \mathcal{H}$, by (\ref{Eq1}), we have
	$$
	A\langle \xi, \xi\rangle_{\mathcal{A}} \leq \left\langle S_{\Xi, \Upsilon} \xi, \xi\right\rangle_{\mathcal{A}}.
	$$
	Applying the Cauchy-Schwarz inequality, we get
	$$
	\begin{aligned}
		A\left\langle \beta_n - \beta_m, \beta_n - \beta_m\right\rangle_{\mathcal{A}} & \leq \left\langle S_{\Xi, \Upsilon}\left(\beta_n - \beta_m\right), \left(\beta_n - \beta_m\right)\right\rangle_{\mathcal{A}} \\
		& \leq \left\|S_{\Xi, \Upsilon}\left(\beta_n\right) - S_{\Xi, \Upsilon}\left(\beta_m\right)\right\| \left\|\beta_n - \beta_m\right\| \\
		& \leq \varepsilon \left\|\beta_n - \beta_m\right\|.
	\end{aligned}
	$$
	
	Thus, $$\left\langle \beta_n - \beta_m, \beta_n - \beta_m\right\rangle_{\mathcal{A}} \leq \left(\frac{\varepsilon}{A}\right)^{2}.$$ This implies that the sequence $\{\beta_n\}_{n=1}^{\infty}$ is a Cauchy sequence in $\mathcal{H}$ and thus converges to some $\beta \in \mathcal{H}$. As $S_{\Xi, \Upsilon}$ is bounded, we have
	$$
	S_{\Xi, \Upsilon}\left(\beta_n\right) \to S_{\Xi, \Upsilon}(\beta), \quad \text{as } n \to \infty.
	$$
	
	Conversely, we also have
	$$
	S_{\Xi, \Upsilon}\left(\beta_n\right) \to \alpha, \quad \text{as } n \to \infty.
	$$
	
	By the uniqueness of the limit, it follows that $S_{\Xi, \Upsilon}(\beta) = \alpha$. Consequently, $\alpha \in \mathcal{R}(S_{\Xi, \Upsilon})$, showing that $\mathcal{R}(S_{\Xi, \Upsilon})$ is closed. Thus, $\mathcal{R}(S_{\Xi, \Upsilon}^*)$ is also closed.
	
	(2) If $(\Xi, \Upsilon)$ is a biframe with bounds $A$ and $B$, then for every $\xi \in \mathcal{H}$,
	$$
	A\langle \xi, \xi\rangle_{\mathcal{A}} \leq \sum_{i=1}^{\infty}\left\langle \xi, \xi_i\right\rangle_{\mathcal{A}}\left\langle \eta_i, \xi\right\rangle_{\mathcal{A}} \leq B\langle \xi, \xi\rangle_{\mathcal{A}}.
	$$
	
	By manipulating the above inequality, we obtain
	$$
	\begin{aligned}
		\sum_{i=1}^{\infty}\left\langle \xi, \xi_i\right\rangle_{\mathcal{A}}\left\langle \eta_i, \xi\right\rangle_{\mathcal{A}} &= \overline{\sum_{i=1}^{\infty}\left\langle \xi, \xi_i\right\rangle_{\mathcal{A}}\left\langle \eta_i, \xi\right\rangle_{\mathcal{A}}} \\
		&= \sum_{i=1}^{\infty}\overline{\left\langle \xi, \xi_i\right\rangle_{\mathcal{A}}\left\langle \eta_i, \xi\right\rangle_{\mathcal{A}}} \\
		&= \sum_{i=1}^{\infty}\left\langle \xi, \eta_i\right\rangle_{\mathcal{A}}\left\langle \xi_i, \xi\right\rangle_{\mathcal{A}}.
	\end{aligned}
	$$
	Hence, we can conclude that
	$$
	A\langle \xi, \xi\rangle_{\mathcal{A}} \leq \sum_{i=1}^{\infty}\left\langle \xi, \eta_i\right\rangle_{\mathcal{A}}\left\langle \xi_i, \xi\right\rangle_{\mathcal{A}} \leq B\langle \xi, \xi\rangle_{\mathcal{A}}.
	$$
	
	This indicates that $(\Xi, \Upsilon)$ is a biframe with bounds $A$ and $B$, and a similar argument can be made for the converse.
	
\end{proof}

\begin{theorem}
	Assume that the pair $(\Xi, \Upsilon)=\left(\left\{\xi_i\right\}_{i=1}^{\infty},\left\{\eta_i\right\}_{i=1}^{\infty}\right)$ forms a biframe for $\mathcal{H}$ with the associated biframe operator $S_{\Xi, \Upsilon}$. Then, for every element $\xi \in \mathcal{H}$, the following reconstruction formula is valid:
	\begin{equation}\label{reconst}
		\xi = \sum_{i=1}^{\infty}\left\langle \xi, S_{\Upsilon, \Xi}^{-1} \xi_i\right\rangle_{\mathcal{A}} \eta_i = \sum_{i=1}^{\infty}\left\langle \xi, \xi_i\right\rangle_{\mathcal{A}} S_{\Xi, \Upsilon}^{-1} \eta_i .
	\end{equation}
\end{theorem}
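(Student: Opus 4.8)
The plan is to derive both identities directly from the structural properties of the biframe operator established in Theorem \ref{T1}: $S_{\Xi,\Upsilon}$ is well-defined, bounded, adjointable and invertible, and $S_{\Xi,\Upsilon}^{\ast}=S_{\Upsilon,\Xi}$. First I would record the auxiliary facts needed. Applying Theorem \ref{T1} with the roles of $\Xi$ and $\Upsilon$ interchanged (legitimate by assertion (2) of that theorem), $S_{\Upsilon,\Xi}$ is also bounded, adjointable and invertible, with $S_{\Upsilon,\Xi}^{\ast}=S_{\Xi,\Upsilon}$. Since the inverse of an invertible adjointable operator on a Hilbert $C^{\ast}$-module is again adjointable, $S_{\Upsilon,\Xi}^{-1}$ is adjointable and $(S_{\Upsilon,\Xi}^{-1})^{\ast}=(S_{\Upsilon,\Xi}^{\ast})^{-1}=S_{\Xi,\Upsilon}^{-1}$; likewise $(S_{\Xi,\Upsilon}^{-1})^{\ast}=S_{\Upsilon,\Xi}^{-1}$.

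For the second equality, I would start from the defining series $S_{\Xi,\Upsilon}\xi=\sum_{i=1}^{\infty}\langle \xi,\xi_i\rangle_{\mathcal{A}}\eta_i$, which converges in norm precisely because $S_{\Xi,\Upsilon}$ is well-defined. Applying the bounded linear operator $S_{\Xi,\Upsilon}^{-1}$ and using its continuity to move it through the convergent series yields $S_{\Xi,\Upsilon}^{-1}S_{\Xi,\Upsilon}\xi=\sum_{i=1}^{\infty}\langle \xi,\xi_i\rangle_{\mathcal{A}}\,S_{\Xi,\Upsilon}^{-1}\eta_i$, that is, $\xi=\sum_{i=1}^{\infty}\langle \xi,\xi_i\rangle_{\mathcal{A}}\,S_{\Xi,\Upsilon}^{-1}\eta_i$.

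For the first equality, the idea is to transfer $S_{\Upsilon,\Xi}^{-1}$ out of the second slot of the inner product. Using the adjoint identity above, for every $i$ one has $\langle \xi,S_{\Upsilon,\Xi}^{-1}\xi_i\rangle_{\mathcal{A}}=\langle (S_{\Upsilon,\Xi}^{-1})^{\ast}\xi,\xi_i\rangle_{\mathcal{A}}=\langle S_{\Xi,\Upsilon}^{-1}\xi,\xi_i\rangle_{\mathcal{A}}$. Setting $\zeta:=S_{\Xi,\Upsilon}^{-1}\xi\in\mathcal{H}$, the first series becomes $\sum_{i=1}^{\infty}\langle \zeta,\xi_i\rangle_{\mathcal{A}}\eta_i=S_{\Xi,\Upsilon}\zeta=S_{\Xi,\Upsilon}S_{\Xi,\Upsilon}^{-1}\xi=\xi$, the convergence again being guaranteed by the well-definedness of $S_{\Xi,\Upsilon}$ applied to $\zeta$.

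The computations are short; the only points requiring care are the bookkeeping of adjoints — in particular confirming $S_{\Upsilon,\Xi}^{\ast}=S_{\Xi,\Upsilon}$ and $(S_{\Upsilon,\Xi}^{-1})^{\ast}=S_{\Xi,\Upsilon}^{-1}$ — and the justification that a bounded adjointable operator may be interchanged with a norm-convergent series, which is merely continuity together with the convergence already supplied by Theorem \ref{T1}. I do not anticipate a real obstacle beyond this routine care, since the essential input, namely the invertibility and the self-adjointness-type relation for the biframe operator, has already been provided by Theorem \ref{T1}.
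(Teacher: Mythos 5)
Your proposal is correct and follows essentially the same route as the paper: both arguments rest on the invertibility of $S_{\Xi,\Upsilon}$ and the relation $S_{\Xi,\Upsilon}^{\ast}=S_{\Upsilon,\Xi}$ from Theorem \ref{T1}, writing $\xi=S_{\Xi,\Upsilon}S_{\Xi,\Upsilon}^{-1}\xi$ for the first identity and $\xi=S_{\Xi,\Upsilon}^{-1}S_{\Xi,\Upsilon}\xi$ for the second, then moving the inverse through the inner product via the adjoint. Your version merely runs the first computation in the reverse direction and is more explicit about the adjointability of $S_{\Upsilon,\Xi}^{-1}$ and the continuity used to pass $S_{\Xi,\Upsilon}^{-1}$ through the norm-convergent series, details the paper leaves implicit.
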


\begin{proof}
	Referencing Theorem \ref{T1}, we recall that $S_{\Xi, \Upsilon}^* = S_{\Upsilon, \Xi}$. For any element $\xi \in \mathcal{H}$, we have
	$$
	\xi = S_{\Xi, \Upsilon} S_{\Xi, \Upsilon}^{-1} \xi = \sum_{i=1}^{\infty}\left\langle S_{\Xi, \Upsilon}^{-1}\xi, \xi_i\right\rangle_{\mathcal{A}} \eta_i = \sum_{i=1}^{\infty}\left\langle \xi, S_{\Upsilon, \Xi}^{-1}\xi_i\right\rangle_{\mathcal{A}} \eta_i
	$$
	and similarly,
	$$
	\xi = S_{\Xi, \Upsilon}^{-1} S_{\Xi, \Upsilon} \xi = S_{\Xi, \Upsilon}^{-1} \sum_{i=1}^{\infty}\left\langle \xi, \xi_i\right\rangle_{\mathcal{A}} \eta_i = \sum_{i=1}^{\infty}\left\langle \xi, \xi_i\right\rangle_{\mathcal{A}} S_{\Xi, \Upsilon}^{-1} \eta_i.
	$$
\end{proof}

\begin{theorem}\label{T43}
	Consider sequences $\Xi=\left\{\xi_i\right\}_{i=1}^{\infty}$ and $\Upsilon=\left\{\eta_i\right\}_{i=1}^{\infty}$ within a Hilbert $C^{\ast}-$module $\mathcal{H}$, equipped with a biframe operator $S_{\Xi, \Upsilon}$ that fulfills the condition:
	\begin{equation}\label{eq43} 
		\left\langle S_{\Xi, \Upsilon} \xi, S_{\Xi, \Upsilon} \xi\right\rangle_{\mathcal{A}} \leq \left\|S_{\Xi, \Upsilon}\right\| \left\langle S_{\Xi, \Upsilon} \xi, \xi\right\rangle_{\mathcal{A}} \quad (\forall \xi \in \mathcal{H}).
	\end{equation}
	Under these circumstances, the following statements are equivalent:
	\begin{enumerate}
		\item The pair $(\Xi, \Upsilon)$ constitutes a biframe for $\mathcal{H}$.
		\item The biframe operator $S_{\Xi, \Upsilon}$ is positive and bounded below.
	\end{enumerate}
\end{theorem}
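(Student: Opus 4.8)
The plan is to establish the two implications separately, deriving $(1)\Rightarrow(2)$ essentially from Theorem \ref{T1} and handling $(2)\Rightarrow(1)$ by using \eqref{eq43} as a bridge between a norm estimate and an estimate in the order of $\mathcal{A}$.

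For $(1)\Rightarrow(2)$, I would simply invoke Theorem \ref{T1}(1): if $(\Xi,\Upsilon)$ is a biframe, then $S_{\Xi,\Upsilon}$ is well defined, bounded, positive and invertible. Boundedness together with invertibility already forces $S_{\Xi,\Upsilon}$ to be bounded below, since $\|\xi\|=\|S_{\Xi,\Upsilon}^{-1}S_{\Xi,\Upsilon}\xi\|\le\|S_{\Xi,\Upsilon}^{-1}\|\,\|S_{\Xi,\Upsilon}\xi\|$ for every $\xi\in\mathcal{H}$, whence $\|S_{\Xi,\Upsilon}\xi\|\ge\|S_{\Xi,\Upsilon}^{-1}\|^{-1}\|\xi\|$. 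Note that \eqref{eq43} is not needed in this direction.

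For $(2)\Rightarrow(1)$, put $S:=S_{\Xi,\Upsilon}$. Since the biframe operator is assumed well defined, one has $\sum_{i=1}^{\infty}\langle\xi,\xi_i\rangle_{\mathcal{A}}\langle\eta_i,\xi\rangle_{\mathcal{A}}=\langle S\xi,\xi\rangle_{\mathcal{A}}$ for every $\xi$, so the task reduces to producing positive constants $A,B$ with $A\langle\xi,\xi\rangle_{\mathcal{A}}\le\langle S\xi,\xi\rangle_{\mathcal{A}}\le B\langle\xi,\xi\rangle_{\mathcal{A}}$. For the upper bound I would exploit positivity of $S$: factoring $S=S^{1/2}S^{1/2}$ and applying the Lemma to $S^{1/2}$ gives $\langle S\xi,\xi\rangle_{\mathcal{A}}=\langle S^{1/2}\xi,S^{1/2}\xi\rangle_{\mathcal{A}}\le\|S^{1/2}\|^{2}\langle\xi,\xi\rangle_{\mathcal{A}}=\|S\|\,\langle\xi,\xi\rangle_{\mathcal{A}}$, so one may take $B=\|S\|$. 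For the lower bound — the real content — I would rearrange \eqref{eq43} into $\langle S\xi,\xi\rangle_{\mathcal{A}}\ge\|S\|^{-1}\langle S\xi,S\xi\rangle_{\mathcal{A}}$ and then use that $S$ is bounded below, i.e. $\langle S\xi,S\xi\rangle_{\mathcal{A}}\ge m\langle\xi,\xi\rangle_{\mathcal{A}}$ for some $m>0$, to get $\langle S\xi,\xi\rangle_{\mathcal{A}}\ge(m/\|S\|)\langle\xi,\xi\rangle_{\mathcal{A}}$. Then $A=m/\|S\|$ and $B=\|S\|$ exhibit $(\Xi,\Upsilon)$ as a biframe.

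The step I expect to be the main obstacle is this last lower estimate: being bounded below is, a priori, a statement about norms, whereas the biframe inequality must hold in the order of the $C^{\ast}$-algebra $\mathcal{A}$, and these are genuinely different notions in a Hilbert $C^{\ast}$-module; the role of \eqref{eq43} is precisely to pass from $\langle S\xi,S\xi\rangle_{\mathcal{A}}$ back to $\langle S\xi,\xi\rangle_{\mathcal{A}}$ without invoking the continuous functional calculus of $S$ (which would otherwise turn $S^{2}\ge mI$ directly into $S\ge\sqrt{m}\,I$). It is worth observing, moreover, that once $S$ is known to be positive the inequality \eqref{eq43} is automatic — since then $\|S\|I-S\ge0$ and $S$ commutes with $\|S\|I-S$, so $S(\|S\|I-S)\ge0$, i.e. $S^{2}\le\|S\|S$ — so in the present theorem \eqref{eq43} is really only restrictive insofar as it is imposed before positivity is available. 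The remaining verifications (convergence of the defining series, self-adjointness, and the closed-range argument underlying invertibility) are routine and proceed exactly as in the proof of Theorem \ref{T1}.
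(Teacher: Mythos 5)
Your proposal is correct and follows essentially the same route as the paper: the direction $(1)\Rightarrow(2)$ is delegated to Theorem \ref{T1}, and the direction $(2)\Rightarrow(1)$ obtains the upper bound $B=\left\|S_{\Xi,\Upsilon}\right\|$ from positivity and the lower bound $A=\alpha^{2}\left\|S_{\Xi,\Upsilon}\right\|^{-1}$ by combining \eqref{eq43} with bounded-belowness, exactly as in the text. Your two added observations --- that \eqref{eq43} becomes automatic once positivity of $S_{\Xi,\Upsilon}$ is known, and that passing from the norm estimate $\alpha\|\xi\|\leq\left\|S_{\Xi,\Upsilon}\xi\right\|$ to the order inequality $\alpha^{2}\langle \xi,\xi\rangle_{\mathcal{A}}\leq\left\langle S_{\Xi,\Upsilon}\xi,S_{\Xi,\Upsilon}\xi\right\rangle_{\mathcal{A}}$ in $\mathcal{A}$ is a genuine (and here unjustified) jump --- are both accurate, and the second points at a step the published proof glosses over in exactly the same way.
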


\begin{proof}
	(1)$\Rightarrow$(2) To establish this implication, we refer to Theorem \ref{T1}, where $S_{\Xi, \Upsilon}$ is confirmed as a positive and invertible operator. Hence, it is necessarily bounded below. 
	
	(2)$\Rightarrow$(1) Suppose $S_{\Xi, \Upsilon}$ is a positive and bounded below operator on $\mathcal{H}$. Given its positive nature and its in $\operatorname{End}_\mathcal{A}^*(\mathcal{H})$, for any $\xi \in \mathcal{H}$, we have
	$$
	\sum_{i=1}^{\infty}\left\langle \xi, \xi_i\right\rangle_{\mathcal{A}}\left\langle \eta_i, \xi\right\rangle_{\mathcal{A}}=\left\langle S_{\Xi, \Upsilon} \xi, \xi\right\rangle_{\mathcal{A}} \leq \left\|S_{\Xi, \Upsilon}\right\|\langle \xi, \xi\rangle_{\mathcal{A}} .
	$$
	
	To establish the lower bound, the hypothesis (\ref{eq43}) of this theorem provides:
	\begin{equation}\label{eq4.5}
		\left\langle S_{\Xi, \Upsilon} \xi, S_{\Xi, \Upsilon} \xi\right\rangle_{\mathcal{A}} \leq \left\|S_{\Xi, \Upsilon}\right\|\left\langle S_{\Xi, \Upsilon} \xi, \xi\right\rangle_{\mathcal{A}} .
	\end{equation}
	
	Additionally, since $S_{\Xi, \Upsilon}$ is bounded below with a lower bound $\alpha$, we have:
	\begin{equation}\label{eq4.6}
		\exists \alpha > 0, \quad \alpha\|\xi\| \leq \left\|S_{\Xi, \Upsilon} \xi\right\| .
	\end{equation}
	
	Utilizing both \ref{eq4.5} and \ref{eq4.6}, we infer that:
	$$
	\alpha^2\langle \xi, \xi\rangle_{\mathcal{A}} \leq \left\|S_{\Xi, \Upsilon}\right\|\left\langle S_{\Xi, \Upsilon} \xi, \xi\right\rangle_{\mathcal{A}}, \quad \forall \xi \in \mathcal{H} .
	$$
	
	This leads to the conclusion that $\left\langle S_{\Xi, \Upsilon} \xi, \xi\right\rangle_{\mathcal{A}} \geq \alpha^2\left\|S_{\Xi, \Upsilon}\right\|^{-1}\langle \xi, \xi\rangle_{\mathcal{A}}$, suggesting that $\alpha^2\left\|S_{\Xi, \Upsilon}\right\|^{-1}$ is a lower bound for $\left\langle S_{\Xi, \Upsilon} \xi, \xi\right\rangle_{\mathcal{A}}$. Consequently, the pair $(\{\xi_i\}_{i=1}^{\infty}, \{\eta_i\}_{i=1}^{\infty})$ qualifies as a biframe for $\mathcal{H}$.
\end{proof}

To address the inquiries raised in the preceding section regarding the structural relationship between two sequences that constitute a biframe, we present the following theorem. This theorem elucidates the dependence of two Riesz bases that create a biframe. Initially, we demonstrate through various examples that such dependence is not a given for Bessel sequences and frames.
\begin{example}
	Let $\{e_k\}_{i=1}^{\infty}$ be an orthonormal basis for $\mathcal{H}$. Consider sequences $\Xi=\{\xi_i\}_{i=1}$ and $\Upsilon=\{\eta_i\}_{i=1}$ defined as follows:
	
	$$\Xi=\{e_1, e_1, e_1, e_2, e_2, e_2, e_3, e_3, e_3, \ldots\},$$ and
	$$\Upsilon=\{2 e_1, e_1, -e_1, \frac{3}{2} e_2, e_2, -e_2, \frac{4}{3} e_3, e_3, -e_3, \ldots\}.$$
	
	For $\xi \in \mathcal{H}$, the following computation holds:
	\begin{align*}
		\sum_{i=1}^{\infty}\left\langle \xi, \xi_i\right\rangle_{\mathcal{A}}\left\langle \eta_i, \xi\right\rangle_{\mathcal{A}} 
		&= \left\langle \xi, e_1\right\rangle_{\mathcal{A}}\left\langle 2e_1, \xi\right\rangle_{\mathcal{A}} 
		+ \left\langle \xi, e_1\right\rangle_{\mathcal{A}}\left\langle e_1, \xi\right\rangle_{\mathcal{A}} \\
		&- \left\langle \xi, e_1\right\rangle_{\mathcal{A}}\left\langle e_1, \xi\right\rangle_{\mathcal{A}}  + \left\langle \xi, e_2\right\rangle_{\mathcal{A}}\left\langle \frac{3}{2}e_2, \xi\right\rangle_{\mathcal{A}} + \cdots \\
		&= 2\left\langle \xi, e_1\right\rangle_{\mathcal{A}}\left\langle e_1, \xi\right\rangle_{\mathcal{A}} 
		+ \frac{3}{2}\left\langle \xi, e_2\right\rangle_{\mathcal{A}}\left\langle e_2, \xi\right\rangle_{\mathcal{A}} 
		+ \frac{4}{3}\left\langle \xi, e_3\right\rangle_{\mathcal{A}}\left\langle e_3, \xi\right\rangle_{\mathcal{A}} + \cdots \\
		&= \sum_{i=1}^{\infty} \frac{i+1}{i}\left\langle \xi, e_i\right\rangle_{\mathcal{A}}\left\langle e_i, \xi\right\rangle_{\mathcal{A}}.
	\end{align*}
	it follows that
	$$
	\langle \xi, \xi\rangle_{\mathcal{A}} \leq \sum_{i=1}^{\infty}\left\langle \xi, \xi_i\right\rangle_{\mathcal{A}}\left\langle \eta_i, \xi\right\rangle_{\mathcal{A}} \leq 2\langle \xi, \xi\rangle_{\mathcal{A}}.
	$$
	Consequently, the pair $(\Xi, \Upsilon)$ forms a biframe with bounds 1 and 2, despite $\{\xi_i\}_{i=1}^{\infty}$ being a frame and $\{\eta_i\}_{i=1}^{\infty}$ not being a frame. This example illustrates that a biframe can be composed of a frame and a non-frame.
\end{example}

\begin{example}
	Consider an orthonormal basis $\{e_k\}_{i=1}^{\infty}$ for a Hilbert space $\mathcal{H}$. Let's define two sequences within this space, namely $\Xi=\{\xi_i\}_{i=1}$ and $\Upsilon=\{\eta_i\}_{i=1}$, as follows:
	
	$$\Xi=\left\{e_1, \frac{1}{\sqrt{2}} e_2, e_3, \frac{1}{2} e_4, e_5, \frac{1}{\sqrt{6}} e_6, \ldots\right\},$$ 
	and 
	$$\Upsilon=\left\{e_1, \sqrt{2} e_2, e_3, 2 e_4, e_5, \sqrt{6} e_6, \ldots\right\}.$$
		For an arbitrary element $\xi$ in $\mathcal{H}$, we can derive the following series of equalities:
	
	\begin{align*}
		\sum_{i=1}^{\infty}\left\langle \xi, \xi_i\right\rangle_{\mathcal{A}}\left\langle \eta_i, \xi\right\rangle_{\mathcal{A}}&= \left\langle \xi, e_1\right\rangle_{\mathcal{A}}\left\langle e_1, \xi\right\rangle_{\mathcal{A}}+ \left\langle \xi, \frac{1}{\sqrt{2}} e_2\right\rangle_{\mathcal{A}}\left\langle \sqrt{2} e_2, \xi\right\rangle_{\mathcal{A}} \\
		&+ \left\langle \xi, e_3\right\rangle_{\mathcal{A}}\left\langle e_3, \xi\right\rangle_{\mathcal{A}} + \left\langle \xi, \frac{1}{2} e_4\right\rangle_{\mathcal{A}}\left\langle 2 e_4, \xi\right\rangle_{\mathcal{A}} \\
		&+ \left\langle \xi, e_5\right\rangle_{\mathcal{A}}\left\langle e_5, \xi\right\rangle_{\mathcal{A}} + \left\langle \xi, \frac{1}{\sqrt{6}} e_6\right\rangle_{\mathcal{A}}\left\langle \sqrt{6} e_6, \xi\right\rangle_{\mathcal{A}} + \cdots \\
		&= \sum_{i=1}^{\infty} \left\langle \xi, e_i\right\rangle_{\mathcal{A}}\left\langle e_i, \xi\right\rangle_{\mathcal{A}}.
	\end{align*}
	
	As highlighted in Example 2.1 in \cite{Xia}, this results in
	$$
	\sum_{i=1}^{\infty}\left\langle \xi, \xi_i\right\rangle_{\mathcal{A}}\left\langle \eta_i, \xi\right\rangle_{\mathcal{A}} = \langle \xi, \xi\rangle_{\mathcal{A}}.
	$$
	
	Therefore, $ ( \Xi,\Upsilon ) $ forms a Parseval biframe for $\mathcal{H}$, even though $\Xi$ is recognized as a Bessel sequence while $\Upsilon$ is not. This scenario illustrates that a Parseval biframe can indeed be composed of a Bessel sequence paired with a non-Bessel sequence.
\end{example}

\begin{theorem}
	Assume that the pair $(\{\xi_i\}_{i=1}^{\infty}, \{\eta_i\}_{i=1}^{\infty})$ constitutes a biframe for $\mathcal{H}$. In this context, the sequence $\{\xi_i\}_{i=1}^{\infty}$ forms a Riesz basis for $\mathcal{H}$ if and only if the sequence $\{\eta_i\}_{i=1}^{\infty}$ also forms a Riesz basis for $\mathcal{H}$.
\end{theorem}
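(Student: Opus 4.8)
The plan is to realise both sequences as the image of a single orthonormal basis under an invertible adjointable operator, using the biframe operator as the bridge. Recall from Theorem~\ref{T1} that the biframe operator $S:=S_{\Xi,\Upsilon}$ is bounded, adjointable, positive and invertible, with $S^{\ast}=S_{\Upsilon,\Xi}$, and that by Theorem~\ref{T1}(2) the pair $(\Upsilon,\Xi)$ is again a biframe. Recall also that, specialising Theorem~\ref{22122.7} to $K=I_{\mathcal{H}}$, a sequence is a Riesz basis for $\mathcal{H}$ exactly when it equals $\{\Theta e_i\}_{i=1}^{\infty}$ for some $\Theta\in\operatorname{End}_{\mathcal{A}}^{\ast}(\mathcal{H})$ that is bounded below and surjective, i.e. invertible, where $\{e_i\}_{i=1}^{\infty}$ is a standard orthonormal basis of $\mathcal{H}$.

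For the forward implication, I would assume $\Xi=\{\xi_i\}_{i=1}^{\infty}$ is a Riesz basis and fix an invertible $\Theta\in\operatorname{End}_{\mathcal{A}}^{\ast}(\mathcal{H})$ with $\Theta e_i=\xi_i$ for all $i$. Then set $U:=S\,(\Theta^{\ast})^{-1}$. Being a composition of invertible elements of $\operatorname{End}_{\mathcal{A}}^{\ast}(\mathcal{H})$, $U$ is itself invertible and adjointable (in particular bounded below and surjective). The crucial point is that $U$ coincides with the synthesis-type map of $\Upsilon$ against $\{e_i\}_{i=1}^{\infty}$: for $\zeta\in\mathcal{H}$ write $\zeta=\Theta^{\ast}\xi$ with $\xi=(\Theta^{\ast})^{-1}\zeta$; then, using the adjoint identity $\langle\xi,\Theta e_i\rangle_{\mathcal{A}}=\langle\Theta^{\ast}\xi,e_i\rangle_{\mathcal{A}}$ together with the well-definedness of $S$,
\[
U\zeta=S\xi=\sum_{i=1}^{\infty}\langle\xi,\xi_i\rangle_{\mathcal{A}}\,\eta_i=\sum_{i=1}^{\infty}\langle\xi,\Theta e_i\rangle_{\mathcal{A}}\,\eta_i=\sum_{i=1}^{\infty}\langle\zeta,e_i\rangle_{\mathcal{A}}\,\eta_i .
\]
Taking $\zeta=e_j$ and using $\langle e_j,e_i\rangle_{\mathcal{A}}=\delta_{ij}1_{\mathcal{A}}$ gives $Ue_j=\eta_j$ for every $j$. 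Hence $\Upsilon=\{Ue_i\}_{i=1}^{\infty}$ with $U$ invertible and adjointable, so Theorem~\ref{22122.7} (with $K=I_{\mathcal{H}}$) shows that $\Upsilon$ is a Riesz basis for $\mathcal{H}$.

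For the converse I would not repeat the argument but instead invoke Theorem~\ref{T1}(2): since $(\Upsilon,\Xi)$ is also a biframe (its biframe operator being $S^{\ast}$), applying the implication just proved to the pair $(\Upsilon,\Xi)$ shows that if $\{\eta_i\}_{i=1}^{\infty}$ is a Riesz basis then so is $\{\xi_i\}_{i=1}^{\infty}$. Together with the forward implication this gives the desired equivalence.

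I expect the one genuinely delicate step to be the middle displayed computation, specifically the claim that the abstract operator $S(\Theta^{\ast})^{-1}$ really equals $\zeta\mapsto\sum_{i=1}^{\infty}\langle\zeta,e_i\rangle_{\mathcal{A}}\eta_i$ — a priori it is not even clear that this last series converges, since $\Upsilon$ need not be a Bessel sequence. This is handled by observing that $\Theta^{\ast}$ is a bijection of $\mathcal{H}$, so every $\zeta$ has the form $\Theta^{\ast}\xi$; for such $\zeta$ the series is literally $\sum_{i=1}^{\infty}\langle\xi,\xi_i\rangle_{\mathcal{A}}\eta_i=S\xi$, whose convergence is part of the assertion that $S$ is well-defined (Theorem~\ref{T1}(1)). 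The rest is routine manipulation of adjoints and orthonormality.
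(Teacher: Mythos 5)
Your proposal is correct and follows essentially the same route as the paper: both take the invertible $\Theta$ with $\Theta e_i=\xi_i$ from Theorem~\ref{22122.7} (with $K=I$), form $U=S_{\Xi,\Upsilon}(\Theta^{\ast})^{-1}$, and verify $Ue_j=\eta_j$ to conclude via the same characterisation of Riesz bases. Your version is in fact slightly more careful than the paper's on two points it glosses over --- why $U$ is bounded below and surjective, and how the converse follows (you invoke the symmetry $(\Upsilon,\Xi)$ from Theorem~\ref{T1}(2) rather than just asserting ``similarly'') --- but the underlying argument is the same.
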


\begin{proof}
	Assume $\{\xi_i\}_{i=1}^{\infty}$ is a Riesz basis. Utilizing Theorem \ref{22122.7} (specifically for the case when $K$ is the identity operator), there exists an operator $\Theta \in \operatorname{End}_{\mathcal{A}}^*(\mathcal{H})$ such that $\xi_i= \Theta e_i$ for each $i \in \mathbb{N}$, where $\{e_i\}_{i=1}^{\infty}$ is an orthonormal basis of $\mathcal{H}$.
	
	Now, define the operator
	$$
	U: \mathcal{H} \rightarrow \mathcal{H}, \quad U \xi=S_{\Xi, \Upsilon}(\Theta^*)^{-1} \xi.
	$$
	For any $j\in \mathbb{N}$, we observe
	$$
	\begin{aligned}
		U e_j &= S_{\Xi, \Upsilon}(\Theta^*)^{-1} e_j \\
		&= \sum_{i=1}^{\infty}\left\langle(\Theta^*)^{-1} e_j, \xi_i\right\rangle_{\mathcal{A}} \eta_i \\
		&= \sum_{i=1}^{\infty}\left\langle(\Theta^*)^{-1} e_j, \Theta e_i\right\rangle_{\mathcal{A}} \eta_i \\
		&= \sum_{i=1}^{\infty}\left\langle \Theta^*(\Theta^*)^{-1} e_j, e_i\right\rangle_{\mathcal{A}} \eta_i \\
		&= \sum_{i=1}^{\infty}\left\langle e_j, e_i\right\rangle_{\mathcal{A}} \eta_i \\
		&= \eta_j.
	\end{aligned}
	$$
	
	Since $U \in \operatorname{End}_{\mathcal{A}}^*(\mathcal{H})$, it follows that $\eta_i = U e_k$ for all $k \in \mathbb{N}$. Therefore, $\{\eta_i\}_{i=1}^{\infty}$ is a Riesz basis for $\mathcal{H}$. Similarly, it can be shown that $\{\xi_i\}_{i=1}^{\infty}$ is a Riesz basis if $\{\eta_i\}_{i=1}^{\infty}$ is a Riesz basis.
\end{proof}

\begin{lemma}\label{L46}
	Suppose $\mathcal{T}_1, \mathcal{T}_2 \in GL^{+}(\mathcal{H})$. Hence $U \mathcal{T}_1 S^* = \mathcal{T}_2$ holds if and only if $S = \mathcal{T}_2^s P \mathcal{T}_1^{-q} \in \operatorname{End}_\mathcal{A}^*(\mathcal{H})$ and $U = \mathcal{T}_2^r Q \mathcal{T}_1^{-p} \in \operatorname{End}_\mathcal{A}^*(\mathcal{H})$, where $Q, P \in \operatorname{End}_\mathcal{A}^*(\mathcal{H})$ and $p, q, s, r \in \mathbb{R}$ such that $QP^* = I d_\mathcal{H}$ and $p+q=1, s+r=1$.
\end{lemma}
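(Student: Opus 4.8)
The proof will be a direct verification of the two implications; the only structural ingredient I would invoke is the continuous functional calculus for the positive invertible operators $\mathcal{T}_1,\mathcal{T}_2$. Since $\mathcal{T}_i\in GL^{+}(\mathcal{H})$, its spectrum is contained in a compact subset of $(0,\infty)$, so for each real exponent $\alpha$ the power $\mathcal{T}_i^{\alpha}$ is a well-defined element of $GL^{+}(\mathcal{H})\subseteq\operatorname{End}_{\mathcal{A}}^{*}(\mathcal{H})$ that is self-adjoint, satisfies $(\mathcal{T}_i^{\alpha})^{-1}=\mathcal{T}_i^{-\alpha}$, and obeys $\mathcal{T}_i^{\alpha}\mathcal{T}_i^{\beta}=\mathcal{T}_i^{\alpha+\beta}$. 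I would record these facts once at the start, since every cancellation below rests on them.

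For the implication ``$U\mathcal{T}_1S^{*}=\mathcal{T}_2$ implies the claimed factorization'', I would fix any reals $p,q,s,r$ with $p+q=1$ and $s+r=1$ and simply exhibit the witnesses $P:=\mathcal{T}_2^{-s}\,S\,\mathcal{T}_1^{q}$ and $Q:=\mathcal{T}_2^{-r}\,U\,\mathcal{T}_1^{p}$. Both are composites of adjointable operators, hence $P,Q\in\operatorname{End}_{\mathcal{A}}^{*}(\mathcal{H})$. The identities $\mathcal{T}_2^{s}P\mathcal{T}_1^{-q}=S$ and $\mathcal{T}_2^{r}Q\mathcal{T}_1^{-p}=U$ follow immediately from the exponent law, and, using that $\mathcal{T}_1^{q}$ and $\mathcal{T}_2^{-s}$ are self-adjoint, $QP^{*}=\mathcal{T}_2^{-r}\,U\,\mathcal{T}_1^{p}\mathcal{T}_1^{q}\,S^{*}\,\mathcal{T}_2^{-s}=\mathcal{T}_2^{-r}\bigl(U\mathcal{T}_1S^{*}\bigr)\mathcal{T}_2^{-s}=\mathcal{T}_2^{-r}\mathcal{T}_2\mathcal{T}_2^{-s}=\mathcal{T}_2^{\,1-r-s}=Id_{\mathcal{H}}$.

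For the converse, I would substitute the assumed forms of $S$ and $U$ into the product $U\mathcal{T}_1S^{*}$. Taking adjoints gives $S^{*}=\mathcal{T}_1^{-q}P^{*}\mathcal{T}_2^{s}$, whence $U\mathcal{T}_1S^{*}=\mathcal{T}_2^{r}Q\,\mathcal{T}_1^{-p}\mathcal{T}_1\mathcal{T}_1^{-q}\,P^{*}\mathcal{T}_2^{s}=\mathcal{T}_2^{r}Q\,\mathcal{T}_1^{\,1-p-q}\,P^{*}\mathcal{T}_2^{s}$; since $p+q=1$ the central factor is $Id_{\mathcal{H}}$, and then $QP^{*}=Id_{\mathcal{H}}$ together with $s+r=1$ collapses the expression to $\mathcal{T}_2^{r}\mathcal{T}_2^{s}=\mathcal{T}_2$.

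No step is a genuine obstacle. The two points requiring care are: (i) confirming that the real powers of $\mathcal{T}_1,\mathcal{T}_2$ are adjointable and satisfy the self-adjointness and exponent-addition rules used when pulling factors through adjoints and cancelling them; and (ii) keeping the quantifiers on $p,q,s,r$ straight, since in the forward direction the factorization is constructed for an arbitrary admissible tuple, whereas in the backward direction such a tuple is supplied. Making these two quantifier roles explicit is what prevents any circularity in the equivalence.
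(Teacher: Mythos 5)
Your proposal is correct and follows essentially the same route as the paper's own proof: both directions rest on defining $P=\mathcal{T}_2^{-s}S\mathcal{T}_1^{q}$ and $Q=\mathcal{T}_2^{-r}U\mathcal{T}_1^{p}$ and cancelling real powers of the positive invertible operators via the exponent conditions $p+q=1$, $s+r=1$. The only difference is that you spell out the functional-calculus facts and the converse computation that the paper leaves as ``straightforward to verify,'' which is a welcome but not substantive addition.
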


\begin{proof}
	Consider real numbers $p, q, r, s$ such that $p+q=1$ and $r+s=1$. Let $\mathcal{T}_1, \mathcal{T}_2 \in GL^{+}(\mathcal{H})$ and suppose $U \mathcal{T}_1 S^*=\mathcal{T}_2$ where $S, U \in \operatorname{End}_\mathcal{A}^*(\mathcal{H})$. Then, we have:
	$$
	\begin{aligned}
		\mathcal{T}_2^{-r} U \mathcal{T}_1^p \mathcal{T}_1^q S^* \mathcal{T}_2^{-s} &= \mathcal{T}_2^{-r} U \mathcal{T}_1^{(p+q)} S^* \mathcal{T}_2^{-s} \\
		&= \mathcal{T}_2^{(-r+1-s)} \\
		&= I d_\mathcal{H}.
	\end{aligned}
	$$
	
	Now, define $Q:=\mathcal{T}_2^{-r} U \mathcal{T}_1^p$ and $P:=\mathcal{T}_2^{-s} S \mathcal{T}_1^q$. We can observe that $Q, P \in \operatorname{End}_\mathcal{A}^*(\mathcal{H})$ and fulfill the condition $Q P^* = I d_H$. Furthermore, it follows that $S=\mathcal{T}_2^s P \mathcal{T}_1^{-q}$ and $U=\mathcal{T}_2^r Q \mathcal{T}_1^{-p}$.
	
	Conversely, if $P, Q \in \operatorname{End}_\mathcal{A}^*(\mathcal{H})$ with $Q P^* = I d_H$, we can define $S:=\mathcal{T}_2^s P \mathcal{T}_1^{-q}$ and $U:=\mathcal{T}_2^r Q \mathcal{T}_1^{-p}$. It is straightforward to verify that $S, U \in \operatorname{End}_\mathcal{A}^*(\mathcal{H})$ and satisfy the condition $U \mathcal{T}_1 S^* = \mathcal{T}_2$.
\end{proof}

\begin{theorem}
	Given a biframe $(\Xi, \Upsilon) = \left(\{\xi_i\}_{i=1}^{\infty}, \{\eta_i\}_{i=1}^{\infty}\right)$ in $\mathcal{H}$, with the designated biframe operator $S_{\Xi, \Upsilon}$,then the pair $\left(\{P \xi_i\}_{i=1}^{\infty}, \{Q \eta_i\}_{i=1}^{\infty}\right)$ also forms a biframe for $\mathcal{H}$ for some operators $P$ and $Q$ in $\operatorname{End}_\mathcal{A}^*(\mathcal{H})$. This equivalence is true if and only if there are operators $R$, which is positive and bounded from below, and $M, N \in \operatorname{End}_\mathcal{A}^*(\mathcal{H})$ satisfying the condition $M N^* = I$. The operators $P$ and $Q$ must be expressible as $P=R^r N S_{\Xi, \Upsilon}^{-p}$ and $Q=R^t M S_{\Xi, \Upsilon}^{-q}$, for real numbers $p, q, r, t$ that meet the criteria $p+q=1$ and $r+t=1$.
\end{theorem}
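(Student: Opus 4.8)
The plan is to identify the biframe operator of the transformed pair $(\{P\xi_i\}_{i=1}^{\infty},\{Q\eta_i\}_{i=1}^{\infty})$ and then read the characterization off from Lemma \ref{L46}. First I would compute, for $P,Q\in\operatorname{End}_\mathcal{A}^*(\mathcal{H})$ and any $\xi\in\mathcal{H}$,
$$
S_{P\Xi,\,Q\Upsilon}(\xi)=\sum_{i=1}^{\infty}\langle \xi,P\xi_i\rangle_{\mathcal{A}}\,Q\eta_i=Q\!\left(\sum_{i=1}^{\infty}\langle P^{*}\xi,\xi_i\rangle_{\mathcal{A}}\,\eta_i\right)=Q\,S_{\Xi,\Upsilon}\,P^{*}\xi,
$$
the passage of $Q$ through the series being justified because $Q$ is bounded and $S_{\Xi,\Upsilon}$ is well defined (Theorem \ref{T1}). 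Thus $S_{P\Xi,\,Q\Upsilon}=Q\,S_{\Xi,\Upsilon}\,P^{*}$ is a well-defined element of $\operatorname{End}_\mathcal{A}^*(\mathcal{H})$.

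Second, I would prove the auxiliary equivalence: $(\{P\xi_i\}_{i=1}^{\infty},\{Q\eta_i\}_{i=1}^{\infty})$ is a biframe for $\mathcal{H}$ if and only if $R:=Q\,S_{\Xi,\Upsilon}\,P^{*}$ is positive and bounded below. The forward implication is Theorem \ref{T1} applied to this pair. For the converse, note that every positive operator satisfies $R^{2}\le\|R\|R$, that is $\langle R\xi,R\xi\rangle_{\mathcal{A}}\le\|R\|\langle R\xi,\xi\rangle_{\mathcal{A}}$ for all $\xi$, so the hypothesis (\ref{eq43}) of Theorem \ref{T43} is automatically satisfied by $S_{P\Xi,\,Q\Upsilon}=R$; Theorem \ref{T43} then yields that the pair is a biframe. (I also use here that a positive, bounded below adjointable operator is invertible: its range is closed, and its orthogonal complement is $\ker R=\{0\}$, so the range is all of $\mathcal{H}$; hence ``positive and bounded below'' coincides with membership in $GL^{+}(\mathcal{H})$.)

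Finally I would apply Lemma \ref{L46}. By Theorem \ref{T1}, $S_{\Xi,\Upsilon}\in GL^{+}(\mathcal{H})$; combining this with the second step, the pair $(\{P\xi_i\},\{Q\eta_i\})$ is a biframe exactly when there exists $R\in GL^{+}(\mathcal{H})$ with $Q\,S_{\Xi,\Upsilon}\,P^{*}=R$. Invoking Lemma \ref{L46} with $\mathcal{T}_1=S_{\Xi,\Upsilon}$, $\mathcal{T}_2=R$, $U=Q$ and $S=P$, this relation is equivalent to the existence of $M,N\in\operatorname{End}_\mathcal{A}^*(\mathcal{H})$ with $MN^{*}=I$ and real numbers $p,q,r,t$ with $p+q=1$ and $r+t=1$ such that $P=R^{r}N\,S_{\Xi,\Upsilon}^{-p}$ and $Q=R^{t}M\,S_{\Xi,\Upsilon}^{-q}$ (after the evident consistent renaming of the four scalar parameters of Lemma \ref{L46}), which is the asserted statement. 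The whole argument is an assembly of Theorems \ref{T1}, \ref{T43} and Lemma \ref{L46}; the only delicate points are the operator identity of the first paragraph and keeping the roles of $P,Q$ and the four exponents correctly matched between Lemma \ref{L46} and this theorem, and I expect that bookkeeping to be the main obstacle rather than any new idea.
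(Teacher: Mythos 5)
Your proposal is correct and follows essentially the same route as the paper: compute $S_{P\Xi,Q\Upsilon}=Q\,S_{\Xi,\Upsilon}\,P^{*}$, reduce biframe-ness of the new pair to positivity and bounded-belowness of this operator via Theorem \ref{T43}, and then read off the factorization from Lemma \ref{L46}. Your one addition — explicitly checking that a positive operator satisfies $\langle R\xi,R\xi\rangle_{\mathcal{A}}\le\|R\|\langle R\xi,\xi\rangle_{\mathcal{A}}$ so that the hypothesis (\ref{eq43}) of Theorem \ref{T43} holds — is a detail the paper silently assumes, and is a welcome clarification rather than a different approach.
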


\begin{proof}
	Assume that the pair $(P\Xi, Q\Upsilon) = \left(\{P \xi_i\}_{i=1}^{\infty}, \{Q \eta_i\}_{i=1}^{\infty}\right)$ forms a biframe for the Hilbert space $\mathcal{H}$, with the associated biframe operator $S_{P\Xi, Q\Upsilon}$ and where $P, Q$ are operators in $\operatorname{End}_\mathcal{A}^*(\mathcal{H})$. For any element $\xi$ in $\mathcal{H}$, we can express:
	$$
	\begin{aligned}
		S_{P\Xi, Q\Upsilon} \xi &= \sum_{i=1}^{\infty}\left\langle \xi, P \xi_i\right\rangle_{\mathcal{A}} Q \eta_i \\
		&= Q \sum_{i=1}^{\infty}\left\langle P^* \xi, \xi_i\right\rangle_{\mathcal{A}} \eta_i \\
		&= Q S_{\Xi, \Upsilon} P^* \xi.
	\end{aligned}
	$$
	
	Given that both $(\Xi, \Upsilon)$ and $(P\Xi, Q\Upsilon)$ are biframes, according to Theorem \ref{T43}, it follows that the operators $S_{\Xi, \Upsilon}$ and $S_{P\Xi, Q\Upsilon}$ are positive and bounded below, and are also elements of $GL^{+}(\mathcal{H})$.
	
	Invoking Lemma \ref{L46} and the equation $S_{P\Xi, Q\Upsilon} \xi = Q S_{\Xi, \Upsilon} P^* \xi$, we deduce the existence of operators $M, N$ in $\operatorname{End}_\mathcal{A}^*(\mathcal{H})$ that satisfy $M N^* = I$, and thus
	$$
	P = S_{P\Xi, Q\Upsilon}^r N S_{\Xi, \Upsilon}^{-p}, \quad Q = S_{P\Xi, Q\Upsilon}^t M S_{\Xi, \Upsilon}^{-q}.
	$$
	
	On the flip side, consider $R$ as a positive and bounded below operator, with $M$ and $N$ being operators in $\operatorname{End}_\mathcal{A}^*(\mathcal{H})$ such that $M N^* = I$, and define
	$$
	P = R^r N S_{\Xi, \Upsilon}^{-p}, \quad Q = R^t M S_{\Xi, \Upsilon}^{-q}.
	$$
	
	By application of Lemma \ref{L46}, we have $R = Q S_{\Xi, \Upsilon} P^*$. Consequently, for every $\xi$ in $\mathcal{H}$,
	$$
	\begin{aligned}
		R \xi &= Q S_{\Xi, \Upsilon} P^* \xi \\
		&= Q\left(\sum_{i=1}^{\infty}\left\langle P^* \xi, \xi_i\right\rangle_{\mathcal{A}} \eta_i\right) \\
		&= \sum_{i=1}^{\infty}\left\langle \xi, P \xi_i\right\rangle_{\mathcal{A}} Q \eta_i.
	\end{aligned}
	$$
	
	Thus, $R$ takes the form of a biframe operator. Given that $R$ is both positive and bounded below, as per Theorem \ref{T43}, it is concluded that $(P\Xi, Q\Upsilon)$ indeed constitutes a biframe.
\end{proof}

\begin{theorem}\label{T48}
	Suppose that $(\Xi, \Upsilon)=\left(\left\{\xi_i\right\}_{i=1}^{\infty},\left\{\eta_i\right\}_{i=1}^{\infty}\right)$ is a biframe for $\mathcal{H}$ with the biframe operator $S_{\Xi, \Upsilon}$. Then $\left(\left\{U \xi_i\right\}_{i=1}^{\infty},\left\{Q \eta_i\right\}_{i=1}^{\infty}\right)$ constitutes a Parseval biframe for $\mathcal{H}$, for some operators $U$ and $Q$ in $\operatorname{End}_\mathcal{A}^*(\mathcal{H})$, if and only if there exist operators $T, P \in \operatorname{End}_\mathcal{A}^*(\mathcal{H})$ satisfying $T P^*=I$ and $U=P S_{\Xi, \Upsilon}^{-p}, Q=T S_{\Xi, \Upsilon}^{-q}$, with $p, q \in \mathbb{R}$ such that $p+q=1$.
	
	In particular, if $(\left\{\xi_i\right\}_{i=1}^{\infty},\left\{\eta_i\right\}_{i=1}^{\infty})$ is a Parseval biframe, then $(\left\{U \xi_i\right\}_{i=1}^{\infty},\left\{Q \eta_i\right\}_{i=1}^{\infty})$ is a Parseval biframe if and only if $Q U^*=I$.
\end{theorem}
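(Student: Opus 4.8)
The plan is to reduce the entire statement to Lemma~\ref{L46}, applied with $\mathcal{T}_1 = S_{\Xi, \Upsilon}$ and $\mathcal{T}_2 = I$. First I would identify the biframe operator of the candidate pair: for fixed $U, Q \in \operatorname{End}_{\mathcal{A}}^*(\mathcal{H})$ and any $\xi \in \mathcal{H}$, pulling $Q$ out of the series and $U^*$ into the inner product gives $\sum_{i=1}^{\infty}\langle \xi, U\xi_i\rangle_{\mathcal{A}}\, Q\eta_i = Q\, S_{\Xi, \Upsilon}\, U^*\xi$, the series converging because $S_{\Xi, \Upsilon}$ is well defined (Theorem~\ref{T1}); hence $S_{U\Xi, Q\Upsilon} = Q\, S_{\Xi, \Upsilon}\, U^*$. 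The next step is to show that $(U\Xi, Q\Upsilon)$ is a Parseval biframe for $\mathcal{H}$ exactly when $Q\, S_{\Xi, \Upsilon}\, U^* = I$. For the forward direction, if the pair is a Parseval biframe then Theorem~\ref{T1} makes $S_{U\Xi, Q\Upsilon}$ self-adjoint, while the Parseval bounds force $\langle S_{U\Xi, Q\Upsilon}\xi, \xi\rangle_{\mathcal{A}} = \langle \xi, \xi\rangle_{\mathcal{A}}$ for all $\xi$, so the $C^*$-module polarization identity yields $S_{U\Xi, Q\Upsilon} = I$. Conversely, $Q\, S_{\Xi, \Upsilon}\, U^* = I$ immediately gives $\sum_{i=1}^{\infty}\langle \xi, U\xi_i\rangle_{\mathcal{A}}\langle Q\eta_i, \xi\rangle_{\mathcal{A}} = \langle Q\, S_{\Xi, \Upsilon}\, U^*\xi, \xi\rangle_{\mathcal{A}} = \langle \xi, \xi\rangle_{\mathcal{A}}$, so the pair is a Parseval biframe.

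Then I would invoke Lemma~\ref{L46}. Theorem~\ref{T1} guarantees $S_{\Xi, \Upsilon} \in GL^{+}(\mathcal{H})$ and $I \in GL^{+}(\mathcal{H})$ trivially, so the lemma (reading its ``$U$'' as our $Q$ and its ``$S$'' as our $U$) says that $Q\, S_{\Xi, \Upsilon}\, U^* = I$ holds if and only if $U = I^{s} P\, S_{\Xi, \Upsilon}^{-q}$ and $Q = I^{r} T\, S_{\Xi, \Upsilon}^{-p}$ for some $T, P \in \operatorname{End}_{\mathcal{A}}^*(\mathcal{H})$ with $T P^* = I$ and reals satisfying $p + q = 1$, $s + r = 1$. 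Since $I^{s} = I^{r} = I$, the exponents $s$ and $r$ are vacuous; relabelling the free parameters $p$ and $q$ (legitimate, as they are unconstrained reals subject only to $p+q=1$) puts the condition in the precise form of the theorem. Chaining this with the first step gives the main equivalence.

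For the ``in particular'' clause I would specialize to the case where $(\Xi, \Upsilon)$ is itself a Parseval biframe: the same self-adjointness-plus-polarization argument applied to $S_{\Xi, \Upsilon} - I$ shows $S_{\Xi, \Upsilon} = I$, hence $S_{\Xi, \Upsilon}^{-p} = S_{\Xi, \Upsilon}^{-q} = I$, and the conditions $U = P\, S_{\Xi, \Upsilon}^{-p}$, $Q = T\, S_{\Xi, \Upsilon}^{-q}$ collapse to $U = P$ and $Q = T$; then $T P^* = I$ is precisely $Q U^* = I$.

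I do not expect a genuine obstacle, since the argument is essentially an application of Lemma~\ref{L46}; the ``hard part'' is really just care with two bookkeeping matters. One is justifying that a self-adjoint adjointable operator $T$ on $\mathcal{H}$ with $\langle T\xi, \xi\rangle_{\mathcal{A}} = \langle \xi, \xi\rangle_{\mathcal{A}}$ for every $\xi$ must equal $I$, which uses the polarization identity for adjointable operators on Hilbert $C^*$-modules. The other is correctly matching the four real parameters $p, q, r, s$ of Lemma~\ref{L46} to the two parameters $p, q$ in the statement; it should also be remarked that the hypothesis $S_{\Xi, \Upsilon} \in GL^{+}(\mathcal{H})$ required by Lemma~\ref{L46} is supplied by Theorem~\ref{T1}, and that convergence of the defining series for $S_{U\Xi, Q\Upsilon}$ is automatic because it factors through the well-defined operator $S_{\Xi, \Upsilon}$.
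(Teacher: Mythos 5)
Your proposal is correct and follows essentially the same route as the paper: the paper proves this as a ``direct consequence'' of the preceding theorem, whose proof is exactly your computation $S_{U\Xi,Q\Upsilon}=Q\,S_{\Xi,\Upsilon}\,U^{*}$ followed by Lemma~\ref{L46}, and the Parseval case amounts to specializing the target operator to $I$ (so that $I^{s}=I^{r}=I$ kills two of the four exponents), which is what you do. Your write-up is in fact more careful than the paper's, since you explicitly justify that Parseval is equivalent to $S_{U\Xi,Q\Upsilon}=I$ via self-adjointness and polarization, a step the paper leaves implicit.
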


\begin{proof}
	The proof of this theorem is a direct consequence of the previous theorem.
\end{proof}

\begin{corollary}
	Let $P, Q$ be operators in $\operatorname{End}_\mathcal{A}^*(\mathcal{H})$. Considering the sequences $\Xi = \{\xi_i\}_{i=1}^{\infty}$ and $\Upsilon = \{\eta_i\}_{i=1}^{\infty}$ within  $\mathcal{H}$, we have the following conditions:
	\begin{enumerate}
		\item If $\Xi$ and $\Upsilon$ serve as dual frames in $\mathcal{H}$.
		\item If $\Xi$ and $\Upsilon$ are biorthogonal frames in $\mathcal{H}$.
	\end{enumerate}
	Under these conditions, the pair $\left(\{P \xi_i\}_{i=1}^{\infty}, \{Q \eta_i\}_{i=1}^{\infty}\right)$ establishes a Parseval biframe precisely when $Q P^* = I$.
	
	Furthermore, in the case where $\{e_k\}_{i=1}^{\infty}$ represents an orthonormal basis of $\mathcal{H}$, the sequence\\ $\left(\{U e_k\}_{i=1}^{\infty}, \{Qe_k\}_{i=1}^{\infty}\right)$ will form a Parseval biframe for $\mathcal{H}$ if and only if the equality $Q U^* = I$ holds true.
\end{corollary}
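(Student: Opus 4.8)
The plan is to reduce the corollary to the ``in particular'' clause of Theorem \ref{T48}, which states that once $(\{\xi_i\}_{i=1}^{\infty},\{\eta_i\}_{i=1}^{\infty})$ is a Parseval biframe, the pair $(\{P\xi_i\}_{i=1}^{\infty},\{Q\eta_i\}_{i=1}^{\infty})$ is a Parseval biframe if and only if $QP^{\ast}=I$. Hence it suffices to prove that, under either hypothesis (1) or hypothesis (2), the pair $(\Xi,\Upsilon)$ is itself a Parseval biframe, equivalently $S_{\Xi,\Upsilon}=\operatorname{Id}_{\mathcal{H}}$; the asserted equivalence then follows by applying Theorem \ref{T48} with $U=P$.

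For (1), if $\Upsilon$ is a dual frame of $\Xi$ then by definition $S_{\Upsilon,\Xi}\xi=\sum_{i=1}^{\infty}\langle \xi,\eta_i\rangle_{\mathcal{A}}\xi_i=\xi$ for every $\xi\in\mathcal{H}$, so $S_{\Upsilon,\Xi}=\operatorname{Id}_{\mathcal{H}}$. Since $\Xi$ and $\Upsilon$ are Bessel, both biframe operators are well defined, so the identity $S_{\Xi,\Upsilon}^{\ast}=S_{\Upsilon,\Xi}$ from the proof of Theorem \ref{T1} yields $S_{\Xi,\Upsilon}=\operatorname{Id}_{\mathcal{H}}$, whence $\sum_{i=1}^{\infty}\langle \xi,\xi_i\rangle_{\mathcal{A}}\langle \eta_i,\xi\rangle_{\mathcal{A}}=\langle S_{\Xi,\Upsilon}\xi,\xi\rangle_{\mathcal{A}}=\langle \xi,\xi\rangle_{\mathcal{A}}$, i.e. $(\Xi,\Upsilon)$ is a Parseval biframe.

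For (2), I would first show that biorthogonality forces $\Upsilon$ to be a dual frame of $\Xi$, thereby reducing to case (1). Because $\Xi$ and $\Upsilon$ are frames (hence Bessel), $S_{\Upsilon,\Xi}\xi=\sum_{i=1}^{\infty}\langle \xi,\eta_i\rangle_{\mathcal{A}}\xi_i$ is a well-defined bounded $\mathcal{A}$-linear operator, and biorthogonality gives $S_{\Upsilon,\Xi}\xi_j=\langle \xi_j,\eta_j\rangle_{\mathcal{A}}\xi_j=\xi_j$ for each $j$, so $S_{\Upsilon,\Xi}$ restricts to the identity on $\operatorname{span}_{\mathcal{A}}\{\xi_i\}_{i=1}^{\infty}$. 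This span is dense in $\mathcal{H}$: the frame operator $S_{\Xi}$ of $\Xi$ is surjective (being positive and invertible) while $\mathcal{R}(S_{\Xi})\subseteq\overline{\operatorname{span}_{\mathcal{A}}\{\xi_i\}}$, so $\overline{\operatorname{span}_{\mathcal{A}}\{\xi_i\}}=\mathcal{H}$. By continuity $S_{\Upsilon,\Xi}=\operatorname{Id}_{\mathcal{H}}$, so case (1) applies and $(\Xi,\Upsilon)$ is a Parseval biframe; Theorem \ref{T48} then gives the equivalence in both cases.

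Finally, an orthonormal basis $\{e_k\}_{k=1}^{\infty}$ satisfies $\sum_{i=1}^{\infty}\langle \xi,e_i\rangle_{\mathcal{A}}\langle e_i,\xi\rangle_{\mathcal{A}}=\langle \xi,\xi\rangle_{\mathcal{A}}$, so $(\{e_k\},\{e_k\})$ is a Parseval biframe, and applying Theorem \ref{T48} with $\Xi=\Upsilon=\{e_k\}$ shows that $(\{Ue_k\},\{Qe_k\})$ is a Parseval biframe if and only if $QU^{\ast}=I$. The one step that is not pure bookkeeping is the density claim in case (2) --- that a frame in a Hilbert $C^{\ast}$-module spans a dense submodule --- which I obtain from surjectivity of the frame operator together with $\mathcal{R}(S_{\Xi})\subseteq\overline{\operatorname{span}_{\mathcal{A}}\{\xi_i\}}$; every remaining step is a short computation or a direct appeal to Theorems \ref{T1} and \ref{T48}, so I anticipate no real difficulty.
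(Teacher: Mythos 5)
Your proposal is correct and follows the same overall template as the paper: show that under either hypothesis $(\Xi,\Upsilon)$ is itself a Parseval biframe, then invoke the ``in particular'' clause of Theorem \ref{T48} (with $U=P$) to get the equivalence with $QP^{\ast}=I$, and handle the orthonormal-basis case by noting $(\{e_k\},\{e_k\})$ is Parseval. For hypothesis (1) your route through $S_{\Upsilon,\Xi}=\operatorname{Id}$ and the adjoint identity $S_{\Xi,\Upsilon}^{\ast}=S_{\Upsilon,\Xi}$ is just a minor repackaging of the paper's one-line computation $\langle \xi,\xi\rangle_{\mathcal{A}}=\sum_i\langle \xi,\xi_i\rangle_{\mathcal{A}}\langle \eta_i,\xi\rangle_{\mathcal{A}}$ from the reconstruction formula. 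The genuine divergence is in hypothesis (2): the paper substitutes the canonical frame expansion $\xi=\sum_k\langle \xi,S_{\Xi}^{-1}\xi_k\rangle_{\mathcal{A}}\xi_k$ into the second slot and collapses the double sum with $\langle \eta_i,\xi_k\rangle_{\mathcal{A}}=\delta_{i,k}1_{\mathcal{A}}$, arriving at $\langle S_{\Xi}S_{\Xi}^{-1}\xi,\xi\rangle_{\mathcal{A}}=\langle \xi,\xi\rangle_{\mathcal{A}}$ directly; you instead prove the (true, and more reusable) intermediate fact that biorthogonal frames are automatically dual, by checking $S_{\Upsilon,\Xi}\xi_j=\xi_j$ and extending by density of the $\mathcal{A}$-span of a frame, which you correctly justify via surjectivity of $S_{\Xi}$ together with $\mathcal{R}(S_{\Xi})\subseteq\overline{\operatorname{span}_{\mathcal{A}}\{\xi_i\}}$. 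Your version buys a clean reduction of case (2) to case (1) and isolates a lemma of independent interest, at the cost of the extra density argument; the paper's version is a self-contained computation that needs no topological input beyond convergence of the frame expansion. Both are sound, and both correctly deliver the ``if and only if'' via Theorem \ref{T48} rather than only the sufficiency direction that the paper's write-up emphasizes.
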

\begin{proof}
	Assume the first condition is met, indicating that $\{\xi_i\}_{i=1}^{\infty}$ and $\{\eta_i\}_{i=1}^{\infty}$ are dual frames in $\mathcal{H}$. This implies that for every $\xi \in \mathcal{H}$, 
	$$\xi = \sum_{i=1}^{\infty}\left\langle \xi, \xi_i\right\rangle_{\mathcal{A}}\eta_i. $$
	Consequently, 
	$$\langle \xi, \xi\rangle_{\mathcal{A}} = \sum_{i=1}^{\infty}\left\langle \xi, \xi_i\right\rangle_{\mathcal{A}}\langle \eta_i, \xi\rangle_{\mathcal{A}}. $$
	Thus, $(\Xi, \Upsilon)$ forms a Parseval biframe. According to Theorem \ref{T48} and given $Q P^* = I$, $(P\Xi, Q\Upsilon)$ is also a Parseval biframe.
	
	Now, if the second condition holds, meaning $\Xi = \{\xi_i\}_{i=1}^{\infty}$ and $\Upsilon = \{\eta_i\}_{i=1}^{\infty}$ are biorthogonal frames, then for $\xi \in \mathcal{H}$,
	$$
	\begin{aligned}
		\sum_{i=1}^{\infty}\left\langle \xi, \xi_i\right\rangle_{\mathcal{A}}\left\langle \eta_i, \xi\right\rangle_{\mathcal{A}} &= \sum_{i=1}^{\infty}\left\langle \xi, \xi_i\right\rangle_{\mathcal{A}}\left\langle \eta_i, \sum_{k=1}^{\infty}\left\langle \xi, S_{\Xi}^{-1} \xi_k\right\rangle \xi_k\right\rangle_{\mathcal{A}} \\
		&= \sum_{i=1}^{\infty} \sum_{k=1}^{\infty}\left\langle \xi, \xi_i\right\rangle_{\mathcal{A}}\left\langle S_{\Xi}^{-1} \xi_k, \xi\right\rangle_{\mathcal{A}}\left\langle \eta_i, \xi_k\right\rangle_{\mathcal{A}} \\
		&= \sum_{i=1}^{\infty} \sum_{k=1}^{\infty}\left\langle \xi, \xi_i\right\rangle_{\mathcal{A}}\left\langle S_{\Xi}^{-1} \xi_k, \xi\right\rangle_{\mathcal{A}} \delta_{i, k} \\
		&= \sum_{i=1}^{\infty}\left\langle \xi, \xi_i\right\rangle_{\mathcal{A}}\left\langle S_{\Xi}^{-1} \xi_i, \xi\right\rangle_{\mathcal{A}} \\
		&= \langle S_{\Xi} S_{\Xi}^{-1} \xi, \xi\rangle_{\mathcal{A}} \\
		&= \langle \xi, \xi\rangle_{\mathcal{A}}.
	\end{aligned}
	$$
	
	Thus, $(\Xi, \Upsilon)$ is a Parseval biframe, and similar to the first conclusion, $(P\Xi, Q\Upsilon)$ is also a Parseval biframe.
	
	Finally, by Theorem \ref{T48}, since $\{e_i\}_{i=1}^{\infty}$ is an orthonormal basis for $\mathcal{H}$, $(\{e_i\}_{i=1}^{\infty}, \{e_i\}_{i=1}^{\infty})$ is a Parseval biframe, implying that $(\{Pe_i\}_{i=1}^{\infty}, \{Qe_i\}_{i=1}^{\infty})$ is a Parseval biframe as well.
\end{proof}

\section*{Acknowledgments}
The authors are thankful to the area editor and referees for giving valuable comments and suggestions
Declarations

\vspace{0.75cm}
\noindent\textbf{\textsf{Declaration:}} This paper has been accepted for publication in the Montes Taurus Journal of Pure and Applied Mathematics (MTJPAM). 

\vspace{0.75cm}
\noindent\textbf{\textsf{Author Contributions:}} The authors equally conceived of the study, participated in its
design and coordination, drafted the manuscript, participated in the
sequence alignment, and read and approved the final manuscript. 

\vspace{0.2cm}
\noindent\textbf{\textsf{Conflicts of Interest:}} The authors declare that they have no competing interests.

\vspace{0.2cm}
\noindent\textbf{\textsf{Funding (Financial Disclosure):}} The authors declare that there is no funding available for this paper.

\vspace{0.2cm}

\end{document}